\author{Michel Dekking and  Derong Kong} 
\newcommand{\prob}[1]{\ensuremath{{\rm P}_{\!\In}\left( #1 \right)}}
\newcommand{\prc}[2]{\ensuremath{{\rm P}_{\!\In}( #1 \, |\, #2)}}
\newcommand{\expec}[1]{\ensuremath{{\rm E}_{\In}\!\left[#1\right]}}
\newcommand{\expectau}[1]{\ensuremath{{\rm E}_{\bar{\tau}}\!\left[#1\right]}}
\newcommand{\expecab}[1]{\ensuremath{{\rm E}_{_{\scriptstyle\ab}}\!\left[#1\right]}}
\newcommand{\expecfr}[1]{\ensuremath{{\rm E}_{_{\scriptstyle\f}}\!\left[#1\right]}}
\newcommand{\varab}[1]{\ensuremath{{\rm Var}_{_{\scriptstyle\ab}}\!\left( #1 \right)}}
\newcommand{\varfr}[1]{\ensuremath{{\rm Var}_{_{\scriptstyle\f}}\!\left( #1 \right)}}
\newcommand{\var}[1]{\ensuremath{{\rm Var}_{\In}\!\left( #1 \right)}}
\newcommand{\In}{\nu}       
\newcommand{\st}{\pi}    
\newcommand{\ei}{\gamma}   
\newcommand{\ab}{{\textsc{F}}}          
\newcommand{\f}{{\textsc{S}}}          
\newcommand{\ea}{\varepsilon_{\ab}}          
\newcommand{\ef}{\varepsilon_{\f}} 
\newcommand{\fab}{f^{\ab}}
\newcommand{\ff}{f^{\f}}
\newcommand{\jab}{\hat{f}^{\ab}}
\newcommand{\jf}{\hat{f}^{\f}}
\newcommand{\kab}{K_n^{\ab}}
\newcommand{\kf}{K_n^{\f}}
\theoremstyle{plain}
\newtheorem{thm}{Theorem}[section]
\newtheorem{lem}{Lemma}[section]
\newtheorem{prop}{Proposition}[section]
\newtheorem{ex}{Example}[section]
\begin{document}

\title{Multimodality of the Markov binomial distribution} 

\address{Delft
University of Technology{Michel Dekking} 
3TU Applied Mathematics Institute and Delft
University of Technology, Faculty EWI, P.O.~Box 5031, 2600 GA Delft,
The Netherlands.  } 
\email{F.M.Dekking@math.tudelft.nl,\quad D.Kong@tudelft.nl}

\maketitle

\begin{abstract}
We study the shape of the probability mass function of the Markov binomial distribution,
 and give necessary and sufficient conditions for the probability mass function to be unimodal, bimodal or trimodal.
 These are useful to analyze the double-peaking results from a PDE reactive transport model from
 the engineering literature. Moreover, we give a closed form expression for the variance of the Markov
binomial distribution, and expressions for the mean and the variance
conditioned on the state at time $n$.
\end{abstract}

\medskip

\keywords{{\it Keywords:} Markov binomial distribution, unimodality, log-concavity, double-peaking in kinetic transport.} 

\smallskip

{\it AMS Maths Subject Classification numbers:}  {60J10} {60J20} 
\section{Introduction}

 The Markov binomial distribution occurs in diverse applications. Examples are weather forecasting,
 stock market trends, DNA matching, quality control (cf.~\cite{Omey}), and biometrics (cf.~\cite{Daugman},
 see also \cite{iris}). In 1924  Markov \cite{Markov}
 showed that under certain conditions a Markov binomial distribution is asymptotically normally distributed. Later in 1953
 Dobru\v{s}in \cite{MR0058150} studied some other limit distributions of a Markov binomial
 distribution.  In 1960 Edwards \cite{Edwards} rediscovered the Markov
 binomial distribution  in connection with work on the human sex ratio. More recently many authors studied its
 distribution and moments
(cf.~\cite{ MR0109363, MR0282416, Viveros}) and its approximations by
compound Poisson distributions and   binomial distributions
(cf.~\cite{MR2327530, MR2485024, MR633240}).

Our interest in the possible lack of unimodality of the Markov binomial distribution arose
 from the paper \cite{Mich} where the authors deduce from simulations a somewhat surprising behaviour
 of double peaking in the concentration of the aqueous part of a solute undergoing kinetic adsorption and moving by advection and dispersion. In our paper \cite{SKTM} we will explain this behaviour rigorously from the multimodality properties that we derive in the present paper.

 Let $\{Y_k, k\ge 1\}$ be a Markov chain on the two states $\{\f,\ab\}$ with initial distribution $\In=(\In_\f, \In_\ab)$ and transition matrix
 \begin{equation}\label{transition matrix}
P=\left[ {\begin{array}{*{20}c}
   {P(\f,\f) } & {P(\f,\ab) }  \\
   {P(\ab,\f) } & {P(\ab,\ab) }  \\
\end{array}}  \right] =\left[ {\begin{array}{*{20}c}
   {1 - a} & {a}  \\
   {b} & {1-b}  \\
\end{array}} \right],
\end{equation}
where we assume $0<a, b< 1$ throughout the paper.
  The \emph{Markov binomial distribution} (MBD) is
 defined for $n\ge 1$ as the distribution of the random variable which counts the number of successes in $n$ experiments with the two outcomes success and failure:
 $$K_n=\sum_{k=1}^n {\bf 1}_{\{Y_k=\f\}}.$$

 We say $K_n$ is a ${\mathit Bin}(n,a,b,\In)$ distributed random variable. Clearly the MBD generalizes the binomial distribution, where $a+b=1$
  and $(\In_\f,\In_\ab)=(b,a)$.

In Section~\ref{var} we will give an explicit formula for the variance of a MBD. This was not given in \cite{Viveros},
 and only implicitly in \cite{MR0282416, Omey}. By introducing the notion of `excentricity' we can write down tractable formulas for the expectation and the variance of a MBD.
For the  application to the reactive transport model we need a bit
more, namely the variances conditioned on the state of the chain at
time $n$.
 Expressions for these formulas will be computed in Section~\ref{varcond}.

 In Section~\ref{PMF} we will give a closed formula for the probability mass function $f_n$ of $K_n$, and we study its shape. The probability mass
 function $f_n$ was implicitly given in \cite{MR0109363, MR0282416, Viveros}, but the closed formula presented here is helpful to study its
 shape. Surprisingly, the shape can be unimodal, bimodal and trimodal.  We show in particular  that when $a+b\ge 1$ the probability mass function of $K_n$ is unimodal, and that the probability mass function of $K_n$ restricted to the interval $[1,n-1]$ is always unimodal.

In Section~\ref{sec:condmass} we give formulas for the  probability
mass functions of $K_n$, \emph{conditional} on the state at time $n$.
Here again our interest arises from the fact that in the reactive
transport model of \cite{Mich} the authors consider the behaviour of
the concentration of the aqueous part of a solute, which corresponds
to conditioning at the state of the chain at time $n$ (aqueous
$\sim$ success, adsorbed $\sim$ failure).

\section{The variance of the Markov binomial distribution}\label{var}

Let $(\st_\f,\st_\ab)$ be the stationary distribution of the chain
$\{Y_k,k\ge 1\}$. We have
$$\st_\f=\frac{b}{a+b}, \quad \st_\ab=\frac{a}{a+b}.$$
In fact, diagonalizing $P$ yields for $n=0,1,2\dots$
\begin{equation}\label{P^n}
P^n=\left[ {\begin{array}{*{20}c}
   {\st_\f } & {\st_\ab }  \\
   {\st_\f } & {\st_\ab }  \\
\end{array}}  \right]+\ei^n\left[ {\begin{array}{*{20}c}
  ~\;\st_\ab   & {-\st_\ab }  \\
   {-\st_\f } &~\;\st_\f   \\
\end{array}}  \right],
\end{equation}
where $\ei=1-a-b$ is the second largest eigenvalue of $P$.  Note that
for $1\le k\le n$,
\begin{equation*}
\prob{Y_k=\f}=\In_\f P^{k-1}(\f,\f)+\In_\ab
P^{k-1}(\ab,\f)=\st_\f\big(1-(1-\frac{\In_\f}{\st_\f})\ei^{k-1}\big),
\end{equation*}
and similarly,
$$\prob{Y_k=\ab}=\In_\f P^{k-1}(\f,\ab)+\In_\ab
P^{k-1}(\ab,\ab)=\st_\ab\big(1-(1-\frac{\In_\ab}{\st_\ab})\ei^{k-1}\big).$$
It appears thus useful to define the \emph{excentricities} $\ef$ and
$\ea$ of an initial distribution $\In$ by
$$ \varepsilon_\tau:=\varepsilon_\tau(\In)=1-\frac{\In_\tau}{\st_\tau},\quad\mbox{for}\quad \tau\in\{\f, \ab\}.$$
Both quantities measure the deviation of the initial distribution $\In$ from the stationary
distribution $\st$.
Using them we can rewrite $\prob{Y_k=\f}$ and $\prob{Y_k=\ab}$ as
\begin{equation}\label{P(Y_k)}
\prob{Y_k=\f}=\st_\f(1-\ef\,\ei^{k-1}),\quad
\prob{Y_k=\ab}=\st_\ab(1-\ea\,\ei^{k-1}).
\end{equation}

Moreover, the expectation of $K_n$ is given by (note that $\ei<1$ since $a+b>0$)
\begin{equation}\label{expect}
\expec{K_n}=\sum_{k=1}^n\expec{\textbf{1}_{\{Y_k=\f\}}}=\sum_{k=1}^n\prob{Y_k=\f}=\st_\f\Big(n-\ef\frac{1-\ei^n}{1-\ei}\Big).
\end{equation}
The expectation of $K_n$ is particularly simple if we start in the
equilibrium distribution, since in this case $\ef=0$.

Obtaining $\var{K_n}$ is more involved, because of correlations.
\begin{prop}\label{prop:var_K_n}
  For any ${\mathit Bin}(n,a,b,\In)$ distributed random variable $K_n$, we have
  \begin{equation*}
  \begin{split}
  \var{K_n}=&~\st_\f\left\{n\,\frac{\st_\ab(1+\ei)}{1-\ei} +\frac{\ei(\ef(\st_\f-\st_\ab)-2\st_\ab)
  -\ef(\st_\ab-\In_\f)}{(1-\ei)^2}+n \ei^n\,\frac{2\ef(\st_\ab-\st_\f)}{1-\ei}\right.\\
  &\quad\quad\quad+\left.\ei^n\Big(\frac{\ef(\st_\f-\st_\ab)}{1-\ei}
  +2\,\frac{\ei\,\st_\ab+\ef(\st_\ab-\In_\f)}{(1-\ei)^2}\Big)-\ei^{2n}\frac{\st_\f\,\ef^2}{(1-\ei)^2}
   \right\}.
   \end{split}
\end{equation*}
\end{prop}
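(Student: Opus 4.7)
The plan is to use the standard decomposition
\begin{equation*}
\var{K_n} = \sum_{k=1}^n \var{\mathbf{1}_{\{Y_k=\f\}}} + 2\sum_{1\le j<k\le n} \mathrm{Cov}_\In\!\left(\mathbf{1}_{\{Y_j=\f\}}, \mathbf{1}_{\{Y_k=\f\}}\right),
\end{equation*}
and evaluate each piece using \eqref{P^n} and \eqref{P(Y_k)}. The marginal variance is immediate: by \eqref{P(Y_k)},
\begin{equation*}
\var{\mathbf{1}_{\{Y_k=\f\}}} = \st_\f(1-\ef\,\ei^{k-1})\bigl(1-\st_\f(1-\ef\,\ei^{k-1})\bigr),
\end{equation*}
which, after expanding and using $1-\st_\f=\st_\ab$, splits into geometric sums in $\ei^{k-1}$ and $\ei^{2(k-1)}$ that are straightforward to evaluate.

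The covariance requires the Markov property: for $j<k$,
\begin{equation*}
\prob{Y_j=\f,Y_k=\f} = \prob{Y_j=\f}\,P^{k-j}(\f,\f),
\end{equation*}
and from \eqref{P^n} we have $P^{k-j}(\f,\f)=\st_\f+\st_\ab\,\ei^{k-j}$. Substituting \eqref{P(Y_k)} and subtracting the product $\prob{Y_j=\f}\prob{Y_k=\f}$, a short manipulation gives
\begin{equation*}
\mathrm{Cov}_\In\!\left(\mathbf{1}_{\{Y_j=\f\}},\mathbf{1}_{\{Y_k=\f\}}\right) = \st_\f(1-\ef\,\ei^{j-1})\bigl(\st_\ab\,\ei^{k-j}+\st_\f\,\ef\,\ei^{k-1}\bigr).
\end{equation*}

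The double sum over $1\le j<k\le n$ then breaks into three separate sums. The term with $\ei^{k-j}$ contributes, after fixing $j$ and summing in $k$, a series of the form $\sum_j \ei(1-\ei^{n-j})/(1-\ei)$, which reduces to a combination of $n/(1-\ei)$, $1/(1-\ei)^2$, and $\ei^n/(1-\ei)^2$. The cross term with $\ei^{j-1}\ei^{k-j}$ and the term with $\ei^{k-1}$ (and its cross version $\ei^{j-1}\ei^{k-1}=\ei^{j+k-2}$) are handled similarly; the last one is responsible for the $\ei^{2n}$ contribution, via $\sum_{j<k}\ei^{j+k-2}$.

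The main obstacle is purely bookkeeping: after all the geometric sums are performed, one must collect the coefficients of $n$, $n\ei^n$, $\ei^n$, $\ei^{2n}$, and the constant term, and verify they agree with the expressions in the statement. Two simplifications that keep the algebra manageable are the identities $\st_\f+\st_\ab=1$ and $\st_\f\,\ef=\st_\f-\In_\f=\In_\ab-\st_\ab$, the latter of which is what produces the combination $\ef(\st_\ab-\In_\f)$ appearing in the formula. Once all pieces are added together, the claimed closed form follows.
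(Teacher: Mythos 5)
Your proposal is correct and follows essentially the same route as the paper: both rest on the joint probability $\prob{Y_j=\f,Y_k=\f}=\prob{Y_j=\f}P^{k-j}(\f,\f)$ obtained from the spectral decomposition \eqref{P^n} together with \eqref{P(Y_k)}, followed by geometric summation; your splitting into marginal variances plus covariances merely distributes the subtraction of $(\expec{K_n})^2$ that the paper performs at the end, and your covariance formula $\st_\f(1-\ef\,\ei^{j-1})(\st_\ab\,\ei^{k-j}+\st_\f\,\ef\,\ei^{k-1})$ checks out. The only thing missing relative to the paper is the explicit execution of the final bookkeeping, which the paper does carry through.
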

\begin{proof}
  Since $\var{K_n}=\expec{K_n^2}-(\expec{K_n})^2$, using
  (\ref{expect}) it suffices to calculate
  \begin{equation*}
  \begin{split}
  \expec{K_n^2}=&~\expec{\Big(\sum_{k=1}^n{\bf 1}_{\{Y_k=\f\}}\Big)^2}=\sum_{k=1}^n \prob{Y_k=\f}+2\!\sum_{1\le i<j\le n}\prob{Y_i=\f, Y_j=\f}\\
  =&~\expec{K_n}+2\!\sum_{1\le i<j\le n}\prob{Y_i=\f, Y_j=\f}.
  \end{split}
  \end{equation*}
  Thus we only need to calculate
  \begin{equation*}
  \begin{split}
    \prob{Y_i=\f,Y_j=\f}=&~\prc{Y_j=\f}{Y_i=\f}\prob{Y_i=\f}=(\st_\f+\st_\ab\ei^{j-i})\st_\f(1-\ef\ei^{i-1})\\
    =&~\st_\f(\st_\f+\st_\ab\ei^{j-i}-\ef\,\st_\f\ei^{i-1}-\ef\,\st_\ab\ei^{j-1}),
  \end{split}
  \end{equation*}
  using (\ref{P^n}) and (\ref{P(Y_k)}). Performing the four summations we
  obtain that
  \begin{equation*}
  \begin{split}
    &~2\!\sum_{1\le i<j\le n}\prob{Y_i=\f,Y_j=\f}\\
    =&~2\st_\f\left\{\st_\f\frac{n(n-1)}{2}+\st_\ab\ei\Big(\frac{n}{1-\ei}-\frac{1-\ei^n}{(1-\ei)^2}\Big)
    -\ef\st_\f\Big(\frac{n}{1-\ei}-\frac{1-\ei^n}{(1-\ei)^2}\Big)\right.\\
    &\hspace{7.5cm}-\left.\ef\st_\ab\Big(\frac{-n\ei^n}{1-\ei}+\frac{\ei(1-\ei^n)}{(1-\ei)^2}\Big)\right\}\\
    =&~\st_\f\left\{n(n-1)\st_\f +2 n\,\frac{\st_\ab\ei-\ef\,\st_\f}{1-\ei}
    +2n\ei^n\frac{\ef\,\st_\ab}{1-\ei}+2(1-\ei^n)\frac{\ef\,\st_\f-\st_\ab\ei(1+\ef)}{(1-\ei)^2}
    \right\},
    \end{split}
  \end{equation*}
  which, combined with (\ref{expect}), completes the proof of the proposition.
\end{proof}

\section{The conditional variance of the Markov binomial distribution}\label{varcond}

Here we are interested in the variance of $K_n$ given the state of
the chain at time $n$. Let $K_n^\tau$ be the random variable $K_n$
conditioned on $Y_n=\tau\in\{\f,\ab\}$. For completeness, we will
first give the corresponding means $\expec{\kf}$ and $\expec{\kab}$
which were also given in \cite{ MR0109363, MR0282416, Viveros}.
Using (\ref{P^n}) and (\ref{P(Y_k)}) we obtain that
\begin{equation}\label{expecfr}
\begin{split}
  \expec{\kf}=&~\expec{K_n\,|\, Y_n=\f}=\sum_{k=1}^n\prc{Y_k=\f}{Y_n=\f}\\
  =&~\sum_{k=1}^n\frac{\prc{Y_n=\f}{ Y_k=\f}\prob{Y_k=\f}}{\prob{Y_n=\f}}=\frac{\sum_{k=1}^n P^{n-k}(\f,\f)\prob{Y_k=\f}}{\st_\f(1-\ef\ei^{n-1})}\\
  =&~\frac{\sum_{k=1}^n(\st_\f+\st_\ab\ei^{n-k})\st_\f(1-\ef\ei^{k-1})}{\st_\f(1-\ef\ei^{n-1})}\\
  =&~n\,\frac{\st_\f-\ef\st_\ab\,\ei^{n-1}}{1-\ef\ei^{n-1}}+\frac{(\st_\ab-\ef\st_\f)(1-\ei^n)}{(1-\ei)(1-\ef\ei^{n-1})},
\end{split}
\end{equation}
and similarly,
\begin{equation}\label{expecab}
\expec{\kab}=n\,\frac{\st_\f-\ea\st_\ab
\ei^{n-1}}{1-\ea\ei^{n-1}}
  +\frac{(\ea\st_\ab-\st_\f)(1-\ei^n)}{(1-\ei)(1-\ea\ei^{n-1})}
  .
\end{equation}

\begin{prop}\label{prop:condition_var_Kn}
The variances of $K_n^\tau$, a $Bin(n,a,b,\In)$ distributed random
variable $K_n$ conditioned on $Y_n=\tau\in\{\f,\ab\}$, are given by
\begin{eqnarray*}
  &&\var{\kf}=n^2\,\frac{\st_\f^2-\ef\,\st_\ab^2\ei^{n-1}}{1-\ef\ei^{n-1}}
  -\Big(n\,\frac{\st_\f-\ef\st_\ab\,\ei^{n-1}}{1-\ef\ei^{n-1}}
  +\frac{(\st_\ab-\ef\st_\f)(1-\ei^n)}{(1-\ei)(1-\ef\ei^{n-1})}\Big)^2\\
 &&\hspace{2cm}-n\Big(\frac{\st_\ab\,\st_\f(1+3\ef\ei^{n-1})}{1-\ef\ei^{n-1}}
  +2\,\frac{\ef\,\st_\f^2+\st_\ab^2\ei^n-2\st_\ab\,\st_\f(1+\ef\ei^{n-1})}{(1-\ei)(1-\ef\ei^{n-1})}\Big)\\
  &&\hspace{1cm}+(1-\ei^n)\Big(\frac{\st_\ab\st_\f(4+\ef)-(\st_\ab+\ef\st_\f^2)}{(1-\ei)(1-\ef\ei^{n-1})}
  +2\,\frac{\ef\,\st_\f^2+\st_\ab^2-2\st_\ab\,\st_\f(1+\ef)}{(1-\ei)^2(1-\ef\ei^{n-1})}\Big),
\end{eqnarray*}
  and
\begin{eqnarray*}
&&\var{\kab}=n^2\,\frac{\st_\f^2-\ea\,\st_\ab^2\ei^{n-1}}{1-\ea\ei^{n-1}}
  -\Big(n\,\frac{\st_\f-\ea\st_\ab\,\ei^{n-1}}{1-\ea\ei^{n-1}}
  +\frac{(\ea\st_\ab-\st_\f)(1-\ei^n)}{(1-\ei)(1-\ea\ei^{n-1})}\Big)^2\\
  &&\hspace{0.6cm}-n\Big(\frac{\st_\ab\,\st_\f(1+(2+\ea)\ei^{n-1})}{1-\ea\ei^{n-1}}
  +2\,\frac{\st_\f^2+\ea\,\st_\ab^2\ei^n-\st_\ab\,\st_\f(1+\ea)(1+\ei^{n-1})}{(1-\ei)(1-\ea\ei^{n-1})}\Big)\\
  &&\hspace{1cm}+(1-\ei^n)\Big(\frac{\st_\ab\st_\f(4+\ea)-(\st_\f+\ea\st_\ab^2)}{(1-\ei)(1-\ea\ei^{n-1})}
  +2\,\frac{\st_\f^2+\ea\,\st_\ab^2-2\st_\ab\,\st_\f(1+\ea)}{(1-\ei)^2(1-\ea\ei^{n-1})}\Big).
\end{eqnarray*}
\end{prop}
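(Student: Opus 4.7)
The plan is to follow the structure of the proof of Proposition~\ref{prop:var_K_n}, applied now to the conditioned variable. Since the conditional means $\expec{\kf}$ and $\expec{\kab}$ are already in hand from (\ref{expecfr}) and (\ref{expecab}), it suffices to compute the second moments
$$\expec{(K_n^\tau)^2}=\expec{K_n^\tau}+2\!\!\sum_{1\le i<j\le n}\!\prc{Y_i=\f,\,Y_j=\f}{Y_n=\tau}$$
for $\tau\in\{\f,\ab\}$, and then form $\expec{(K_n^\tau)^2}-(\expec{K_n^\tau})^2$.

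For the joint conditional probability I would use Bayes' rule together with the Markov property to factor
$$\prc{Y_i=\f,Y_j=\f}{Y_n=\tau}=\frac{\prob{Y_i=\f}\,P^{j-i}(\f,\f)\,P^{n-j}(\f,\tau)}{\prob{Y_n=\tau}},$$
and then substitute the explicit forms $P^k(\f,\f)=\st_\f+\st_\ab\ei^k$ and $P^k(\f,\ab)=\st_\ab(1-\ei^k)$ read off from (\ref{P^n}), together with (\ref{P(Y_k)}). For $\tau=\f$ this gives
$$\prc{Y_i=\f,Y_j=\f}{Y_n=\f}=\frac{\st_\f(1-\ef\ei^{i-1})(\st_\f+\st_\ab\ei^{j-i})(\st_\f+\st_\ab\ei^{n-j})}{1-\ef\ei^{n-1}},$$
while for $\tau=\ab$ the last bracket is replaced by $(1-\ei^{n-j})$ and the denominator by $1-\ea\ei^{n-1}$.

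Expanding the triple product in the numerator into monomials in $\ei^{i-1}$, $\ei^{j-i}$ and $\ei^{n-j}$ (and their products), each double sum $\sum_{1\le i<j\le n}$ reduces to an iterated geometric sum of exactly the type encountered in the proof of Proposition~\ref{prop:var_K_n}; the result is a combination of the building blocks $n$, $n\ei^n$, $(1-\ei^n)/(1-\ei)$ and $(1-\ei^n)/(1-\ei)^2$. Collecting these contributions, adding $\expec{K_n^\tau}$ from (\ref{expecfr}) or (\ref{expecab}), and then subtracting the square of the conditional mean, one arrives at the expressions displayed in the proposition after regrouping by powers of $n$.

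The conceptual content here is modest; the main obstacle is algebraic bookkeeping. The expansion of the triple product into roughly eight monomials, each giving rise to its own double geometric sum, creates many terms, and only partial cancellation occurs when $(\expec{K_n^\tau})^2$ is subtracted. In the case $\tau=\ab$ a small additional simplification is useful: after substitution one sees both $\ef$ (from the factor $\prob{Y_i=\f}$) and $\ea$ (from the factor $\prob{Y_n=\ab}$), and one uses the linear relation $\st_\f\ef+\st_\ab\ea=0$ (which is immediate from the definitions, since $\In_\f+\In_\ab=\st_\f+\st_\ab=1$) to eliminate $\ef$ and write everything in terms of $\ea$, matching the form of the proposition. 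Keeping signs and powers of $\ei$ straight while regrouping is where the real work of the proof lies.
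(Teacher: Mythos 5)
Your proposal follows the paper's proof essentially verbatim: the same decomposition $\expec{(K_n^\tau)^2}=\expec{K_n^\tau}+2\sum_{i<j}\prc{Y_i=\f,Y_j=\f}{Y_n=\tau}$, the same Bayes/Markov factorization into $\prob{Y_i=\f}\,P^{j-i}(\f,\f)\,P^{n-j}(\f,\tau)/\prob{Y_n=\tau}$, and the same reduction of the expanded product to the eight double geometric sums; your identity $\st_\f\ef+\st_\ab\ea=0$ is a tidy way to handle the $\tau=\ab$ case, which the paper dismisses as ``similar.'' The only blemish is that your displayed formula for $\prc{Y_i=\f,Y_j=\f}{Y_n=\f}$ carries a spurious leading factor $\st_\f$: since $\prob{Y_n=\f}=\st_\f(1-\ef\ei^{n-1})$, that factor cancels and the correct expression is $(1-\ef\ei^{i-1})(\st_\f+\st_\ab\ei^{j-i})(\st_\f+\st_\ab\ei^{n-j})/(1-\ef\ei^{n-1})$, as in the paper.
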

\begin{proof}
  Since the calculation of $\var{\kab}$ is similar to $\var{\kf}$, we only deal with $\var{\kf}$.
  Note that $\var{\kf}=\expec{(\kf)^2}-(\expec{\kf})^2$. Using
  (\ref{expecfr}) it suffices to calculate
  \begin{equation*}
  \begin{split}
    \expec{(\kf)^2}=&~\expec{\Big(\sum_{k=1}^n{\bf 1}_{\{Y_k=\f\}}\Big)^2\,\Big|\, Y_n=\f}\\
    =&~\sum_{k=1}^n\prc{Y_k=\f}{Y_n=\f}+2\sum_{1\le i<j\le n}\prc{Y_i=\f, Y_j=\f}{Y_n=\f}\\
    =&~\expecfr{K_n}+2\sum_{1\le i<j\le n}\prc{Y_i=\f,
    Y_j=\f}{Y_n=\f}.
    \end{split}
  \end{equation*}
It follows from (\ref{P^n}) and (\ref{P(Y_k)}) that
 \begin{equation*}
  \begin{split}
    &~\prc{Y_i=\f, Y_j=\f}{Y_n=\f}=\frac{\prob{Y_i=\f} \prc{Y_j=\f, Y_n=\f}{Y_i=\f}}{\prob{Y_n=\f}}\\
    =&~\frac{\prob{Y_i=\f} P^{j-i}(\f,\f) P^{n-j}(\f,\f)}{\prob{Y_n=\f}}
    =\frac{(1-\ef\ei^{i-1})(\st_\f+\st_\ab\ei^{j-i})(\st_\f+\st_\ab\ei^{n-j})}{1-\ef\ei^{n-1}}\\
    =&~\frac{\st_\ab^2\ei^{n-i}-\st_\f^2\ef\ei^{i-1}}{1-\ef\ei^{n-1}}
    +\frac{\st_\f^2-\st_\ab^2\ef\ei^{n-1}}{1-\ef\ei^{n-1}}+\frac{\st_\ab\st_\f(\ei^{n-j}-\ef\ei^{j-1})}{1-\ef\ei^{n-1}}\\
    &\hspace{5.2cm}+\,\frac{\st_\ab\st_\f(\ei^{j-i}-\ef\ei^{n-1-(j-i)})}{1-\ef\ei^{n-1}}.
  \end{split}
  \end{equation*}
  Performing the eight summations in the above equation we obtain
  that
 \begin{equation*}
  \begin{split}
 &~2\sum_{1\le i<j\le n}\prc{Y_i=\f,
    Y_j=\f}{Y_n=\f}\\
    =&~\Big(2(1-\ei^n)\frac{\st_\f^2\ef+\st_\ab^2\ei}{(1-\ef\ei^{n-1})(1-\ei)^2}
    -2n\,\frac{\st_\f^2\ef+\st_\ab^2\ei^n}{(1-\ef\ei^{n-1})(1-\ei)}\Big)    \\
    &~+\,\frac{n(n-1)(\st_\f^2-\st_\ab^2\ef\ei^{n-1})}{1-\ef\ei^{n-1}}+\frac{2\st_\f\st_\ab}{1-\ef\ei^{n-1}}\Big(n\frac{1+\ef\ei^n}{1-\ei}-(1-\ei^n)\frac{1+\ef\ei}{(1-\ei)^2}\Big)\\
    &\hspace{4cm}+\,\frac{2\st_\f\st_\ab}{1-\ef\ei^{n-1}}\Big(n\frac{\ei+\ef\ei^{n-1}}{1-\ei}-(1-\ei^n)\frac{\ei+\ef}{(1-\ei)^2}\Big)\\
    =&~n(n-1)\frac{\st_\f^2-\st_\ab^2\ef\ei^{n-1}}{1-\ef\ei^{n-1}}
    +2n\,\frac{\st_\f\st_\ab(1+\ei)(1+\ef\ei^{n-1})-(\st_\f^2\ef+\st_\ab^2\ei^n)}{(1-\ef\ei^{n-1})(1-\ei)}\\
    &\hspace{4.1cm}+2(1-\ei^n)\frac{\st_\f^2\ef+\st_\ab^2\ei-\st_\f\st_\ab(1+\ei)(1+\ef)}{(1-\ef\ei^{n-1})(1-\ei)^2},
 \end{split}
  \end{equation*}
which, combined with (\ref{expecfr}), yields the expression for
$\var{\kf}$.
\end{proof}

 For the special
  initial distributions $(0,1)$ and $(1,0)$, we have the excentricities
  $\ef\big((0,1)\big)=1=\ea\big((1,0)\big)$. Substituting them in equations (\ref{expecfr}), (\ref{expecab}) and Proposition
  \ref{prop:condition_var_Kn} we obtain that
  $$\expecab{\kf}=\expecfr{\kab},\quad \varab{\kf}=\varfr{\kab},$$
  where
  $$\rm{E}_{_{\scriptstyle\ab}}:=\rm{E}_{(0,1)},~~
  \rm{E}_{_{\scriptstyle\f}}:=\rm{E}_{(1,0)},~~
  \rm{Var}_{_{\scriptstyle\ab}}:=\rm{Var}_{(0,1)},~~\rm{Var}_{_{\scriptstyle\f}}:=\rm{Var}_{(1,0)}.$$
  More generally we have the following.
\begin{prop}
  For any ${\mathit Bin}(n,a,b,\In)$ distributed random variable $K_n$ and any positive integer $m$, the $m^{\rm th}$ moment of $\kf$
   conditioned on $Y_1=\ab$ is equal to the $m^{\rm th}$ moment of $\kab$ conditioned on $Y_1=\f$, i.e.,
   for $m=1,2,\dots$
   $$
   \expecab{(\kf)^m}=\expecfr{(\kab)^m}.
   $$

\end{prop}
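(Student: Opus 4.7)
The plan is to prove the equality by a time-reversal argument that exploits reversibility of the two-state chain. First I would observe that the detailed balance relation $\st_\f P(\f,\ab)=\st_\ab P(\ab,\f)$ reduces to $\frac{b}{a+b}\cdot a=\frac{a}{a+b}\cdot b$, which is automatic, and iterating it gives $\st_\ab P^{n-1}(\ab,\f)=\st_\f P^{n-1}(\f,\ab)$. So the chain is reversible and one has $P(x,y)=(\st_y/\st_x)P(y,x)$ for all $x,y\in\{\f,\ab\}$.

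The main step is to establish the following distributional identity: conditioned on $Y_1=\ab$ and $Y_n=\f$, the reversed sequence $(Y_n,Y_{n-1},\dots,Y_1)$ has the same joint law as $(Y_1,Y_2,\dots,Y_n)$ conditioned on $Y_1=\f$ and $Y_n=\ab$. To see this, for any path $(\ab,y_2,\dots,y_{n-1},\f)$ I would write its probability as $P(\ab,y_2)P(y_2,y_3)\cdots P(y_{n-1},\f)$ and apply $P(x,y)=(\st_y/\st_x)P(y,x)$ factor by factor; the ratios $\st_{y_{k+1}}/\st_{y_k}$ telescope, leaving only the boundary factor $\st_\f/\st_\ab$ times the probability of the reversed path $(\f,y_{n-1},\dots,y_2,\ab)$. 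Dividing by $P^{n-1}(\ab,\f)$ and invoking the $(n-1)$-step detailed balance $\st_\ab P^{n-1}(\ab,\f)=\st_\f P^{n-1}(\f,\ab)$ matches the two conditional laws exactly.

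The argument then concludes by noting that $K_n=\sum_{k=1}^n {\bf 1}_{\{Y_k=\f\}}$ is a symmetric function of $Y_1,\dots,Y_n$ and hence invariant under reversal of the sequence. The identity from the previous step therefore transfers to $K_n$, so $\kf$ under $\expecab$ and $\kab$ under $\expecfr$ are equidistributed; in particular all their moments coincide, which is the desired conclusion.

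I do not foresee a genuine obstacle. The only mild care needed is recognizing that once one conditions on $Y_1$, the original initial distribution $\In$ drops out of all subsequent joint probabilities, so $\expecab$ and $\expecfr$ may safely be treated as the point-mass initial distributions $(0,1)$ and $(1,0)$, in agreement with the conventions set just before the proposition.
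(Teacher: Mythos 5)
Your argument is correct, and it reaches the conclusion by a cleaner route than the paper, although both rest on the same engine: time reversibility of the two-state chain. The paper first expands $K_n^m=\bigl(\sum_k{\bf 1}_{\{Y_k=\f\}}\bigr)^m$ into a linear combination of sums of indicators over ordered tuples $i_1<\dots<i_k$, and then verifies, tuple by tuple, the identity $\mathrm{P}_\ab(Y_{i_1}=\f,\dots,Y_{i_k}=\f\,|\,Y_n=\f)=\mathrm{P}_\f(Y_{n-i_k+1}=\f,\dots,Y_{n-i_1+1}=\f\,|\,Y_n=\ab)$ using the Markov property together with $\st_\ab P^{j}(\ab,\f)=\st_\f P^{j}(\f,\ab)$. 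You instead prove the path-level statement that, conditioned on $Y_1=\ab$ and $Y_n=\f$, the reversed trajectory is equal in law to the trajectory conditioned on $Y_1=\f$ and $Y_n=\ab$ (via the telescoping of the detailed-balance ratios, which is exactly right), and then observe that $K_n$ is invariant under reversal. This buys you more with less: you obtain equality of the two conditional \emph{distributions} of $K_n$, not merely of all moments, and you avoid the multinomial expansion and the case distinction $i_1=1$ versus $i_1\ge 2$ entirely (a path starting at $\ab$ automatically contributes nothing to $\{Y_1=\f\}$). The only implicit assumption, shared with the paper, is $n\ge 2$ so that the conditioning events have positive probability.
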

\begin{proof}
  Note that for $m\le n$
 \begin{equation*}
  \begin{split}
    K_n^m=\Big(\sum_{k=1}^n{\bf 1}_{\{Y_k=\f\}}\Big)^m=&~C_1\sum_{k=1}^n{\bf 1}_{\{Y_k=\f\}}+C_2 \sum_{i_1<i_2}{\bf 1}_{\{Y_{i_1}=\f, Y_{i_2}=\f\}}\\
   &+\cdots+C_m\sum_{i_1<i_2<\dots<i_m}{\bf 1}_{\{Y_{i_1}=\f,Y_{i_2}=\f,\dots,Y_{i_m}=\f\}},
 \end{split}
  \end{equation*}
 where the $C_i$'s are constants related to $n$ and $m$.  This implies
 that for $\tau\in\{\f,\ab\}$
 \begin{equation*}
  \begin{split}
    \expectau{(K_n^\tau)^m}=&~C_1\sum_{k=1}^n \mathrm{P}_{\bar{\tau}}(Y_k=\f\,|\, Y_n=\tau)+C_2\sum_{i_1<i_2}
    \mathrm{P}_{\bar{\tau}}(Y_{i_1}=\f, Y_{i_2}=\f\,|\, Y_n=\tau)\\
    &+\cdots+C_m\sum_{i_1<i_2<\dots<i_m}\mathrm{P}_{\bar{\tau}}(Y_{i_1}=\f, Y_{i_2}=\f,\dots,Y_{i_m}=\f\,|\, Y_n=\tau),
 \end{split}
  \end{equation*}
  where $\bar{\f}=\ab, \bar{\ab}=\f$ and $\mathrm{P}_\ab:=\mathrm{P}_{(0,1)}, \mathrm{P}_\f:=\mathrm{P}_{(1,0)}$.

  Thus we only need to show that for $1\le i_1<\dots<i_k\le n$,
  \begin{equation}\label{eq:time reversibel}
\mathrm{P}_\ab(Y_{i_1}=\f,\dots,Y_{i_k}=\f\,|\,
Y_n=\f)=\mathrm{P}_\f(Y_{n-i_k+1}=\f,\dots,Y_{n-i_1+1}=\f\,|\,
Y_n=\ab).
  \end{equation}
 It is easy to see that both sides of Equation (\ref{eq:time reversibel}) equal $0$ if $i_1=1$. Now suppose $i_1\ge 2$.
 Since $\{Y_k,k\ge 1\}$ is a homogeneous time reversible Markov chain,
 we have
 \begin{equation*}
  \begin{split}
  &~\mathrm{P}_\ab(Y_{i_1}=\f,\dots,Y_{i_k}=\f\,|\, Y_n=\f)\\
  =&~\frac{\mathrm{P}_\ab(Y_n=\f\,|\,Y_{i_k}=\f)\mathrm{P}_\ab(Y_{i_k}=\f\,|\, Y_{i_{k-1}}=\f)\cdots\mathrm{P}_\ab(Y_{i_2}=\f\,|\, Y_{i_1}=\f)
  \mathrm{P}_\ab(Y_{i_1}=\f)}{\mathrm{P}_\ab(Y_n=\f)}\\
  =&~\frac{P^{n-i_k}(\f, \f) P^{i_k-i_{k-1}}(\f, \f) \cdots P^{i_2-i_1}(\f,\f) P^{i_1-1}(\ab, \f)}{P^{n-1}(\ab, \f)}\\
  =&~\frac{P^{n-i_k}(\f,\f) P^{i_k-i_{k-1}}(\f, \f) \cdots P^{i_2-i_1}(\f, \f) \frac{\st_\f}{\st_\ab}P^{i_1-1}(\f,\ab)}{\frac{\st_\f}{\st_\ab}P^{n-1}(\f,\ab)}\\
  =&~\mathrm{P}_\f(Y_{n-i_k+1}=\f,Y_{n-i_{k-1}+1}=\f,\dots,Y_{n-i_1+1}=\f\,|\, Y_n=\ab),
 \end{split}
  \end{equation*}
which yields Equation (\ref{eq:time reversibel}). Thus the
proposition is established for $m\le n$. In a similar way, one can
show that the proposition holds for all $m> n$.
\end{proof}

\section{The probability mass function of the Markov binomial distribution}\label{PMF}
For any $Bin(n,a,b,\nu)$ distributed random variable $K_n$, we will
give sufficient and necessary conditions for the probability mass
function of $K_n$ to be unimodal, bimodal or trimodal. These three
kinds of shapes are mentioned by Viveros et al.~\cite{Viveros}
without any further explanation.

 Given $n\ge 1$, let $f_{n}$ be the probability mass function of $K_n$, i.e.,
 $$f_{n}(j)=\prob{K_n=j}. $$
Particularly, $f_n(j)=0$ if $j<0$ or $j>n$. By an easy computation,
\begin{equation*}
\begin{split}
  f_{n+2}(j+1)=&~\prob{K_{n+1}=j+1,Y_{n+1}=\ab}P(\ab,\ab)+\prob{K_{n+1}=j,Y_{n+1}=\ab}P(\ab,\f)\\
  &+\prob{K_{n+1}=j+1,Y_{n+1}=\f}P(\f,\ab)+\prob{K_{n+1}=j, Y_{n+1}=\f}P(\f,\f)\\
  =&~f_{n+1}(j+1)P(\ab,\ab)+\prob{K_{n+1}=j+1,Y_{n+1}=\f}\big(P(\f,\ab)-P(\ab,\ab)\big)\\
  &\qquad\quad+f_{n+1}(j)P(\f,\f)+\prob{K_{n+1}=j,Y_{n+1}=\ab}\big(P(\ab,\f)-P(\f,\f)\big)\\
  =&~P(\ab,\ab)f_{n+1}(j+1)+P(\f,\f)f_{n+1}(j)+\big(P(\f,\ab)-P(\ab,\ab)\big)f_{n}(j),
\end{split}
\end{equation*}
where the last equality holds since
$$
P(\f,\ab)+P(\f,\f)=P(\ab,\ab)+P(\ab,\f)=1.
$$
Substituting (\ref{transition matrix}) in the above recursion equation yields that for $n\ge 1$
\begin{equation}\label{eq:recursion_prob_mass_func}
f_{n+2}(j+1)=(1-b)f_{{n+1}}(j+1)+(1-a)f_{{n+1}}(j)-(1-a-b)f_{{n}}(j)
\end{equation}
with initial conditions
\begin{equation}\label{eq:initial conditions}
\begin{split}
&f_1(0)=\In_\ab,\quad f_{1}(1)=\In_\f;\\
&f_2(0)=\In_\ab (1-b),\quad f_2(1)=\In_\ab b+\In_\f a,\quad f_2(2)=\In_\f(1-a).
\end{split}
\end{equation}
In \cite{MR0109363, MR0282416, Viveros} (implicit) expressions for
the probability mass function of $K_n$ are given, but the closed
form presented here is more helpful to study its shape.

\begin{prop}\label{prop:fn}
The probability mass function $f_n$ of a  $Bin(n,a,b,\In)$ distributed random variable $K_n$ can be written as
$$
f_n(j)=\left\{
\begin{array}{ll}
  \In_\ab(1-b)^{n-1}&\quad j=0,\\
  (1-b)^{n-j}(1-a)^{j-1}\sum\limits_{k=0}^{j-1}\binom{j-1}{k}\delta^k c_{j-1,k}(n)&\quad 1\le j\le n-1,\\
  \In_\f (1-a)^{n-1}&\quad j=n,\\
  0&\quad \textrm{otherwise},
\end{array}
\right.
$$
where $\delta=a b/\big((1-a)(1-b)\big)$ and
$$
c_{j,k}(n)=\In_\f\binom{n-2-j}{k-1}+\frac{\In_\f a+\In_\ab b }{1-b}\binom{n-2-j}{k}+\frac{\In_\ab a b}{(1-b)^2}\binom{n-2-j}{k+1}.
$$
\end{prop}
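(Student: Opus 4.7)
The plan is to give a combinatorial proof by enumerating the sample paths of the chain $\{Y_k\}_{k=1}^n$ with $K_n=j$ and grouping them according to the boundary pair $(Y_1,Y_n)$. Any such path decomposes into alternating maximal runs (``blocks'') of $\f$'s and $\ab$'s, and knowing $(Y_1,Y_n)$ together with the number of $\f$-blocks completely determines the transition counts and hence the probability.

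The boundary cases are immediate. The event $\{K_n=0\}$ forces $Y_1=\ab$ followed by $n-1$ consecutive $\ab\to\ab$ transitions, giving $f_n(0)=\In_\ab(1-b)^{n-1}$. Symmetrically $f_n(n)=\In_\f(1-a)^{n-1}$, and $f_n(j)=0$ for $j\notin[0,n]$.

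For $1\le j\le n-1$ the plan is to split the sum over paths into the four cases $(Y_1,Y_n)\in\{\f,\ab\}^2$, parametrizing within each case by a nonnegative integer $k$ chosen so that the path probability acquires a factor $\delta^k$, where $\delta:=ab/((1-a)(1-b))$. In the $(\f,\f)$ case a contributing path has $k+1$ $\f$-blocks and $k$ $\ab$-blocks; the number of such paths equals $\binom{j-1}{k}\binom{n-j-1}{k-1}$ (compositions of $j$ into $k+1$ positive parts and of $n-j$ into $k$ positive parts), and each has probability $\In_\f(1-a)^{j-k-1}(1-b)^{n-j-k}a^k b^k$. The remaining three cases are handled the same way: $(\f,\ab)$ and $(\ab,\f)$ both have $k+1$ blocks of each type, contributing respectively with prefactors $\In_\f a/(1-b)$ and $\In_\ab b/(1-b)$ times $\binom{j-1}{k}\binom{n-j-1}{k}\delta^k$; while $(\ab,\ab)$ has $k+1$ $\f$-blocks and $k+2$ $\ab$-blocks, contributing $\In_\ab ab/(1-b)^2$ times $\binom{j-1}{k}\binom{n-j-1}{k+1}\delta^k$, after the common factor $(1-a)^{j-1}(1-b)^{n-j}$ has been pulled out.

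Collecting the coefficient of $\binom{j-1}{k}\delta^k$ across the four cases, $(\f,\f)$ and $(\ab,\ab)$ give the outer two terms of $c_{j-1,k}(n)$ while $(\f,\ab)$ and $(\ab,\f)$ combine into the middle term $\frac{\In_\f a+\In_\ab b}{1-b}\binom{n-j-1}{k}$; since $n-j-1=n-2-(j-1)$, the sum matches the asserted formula. The principal difficulty is purely bookkeeping: aligning $k$ with the number of blocks of each type in each of the four cases and verifying that $(\f,\ab)$ and $(\ab,\f)$ combine into a single binomial term. An alternative route is induction on $n$ via the recursion \eqref{eq:recursion_prob_mass_func} with base cases \eqref{eq:initial conditions}, reducing the inductive step to Pascal's identity on the binomials inside $c_{j-1,k}(n)$; but the combinatorial approach above makes the origin of $\delta$ transparent.
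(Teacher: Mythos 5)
Your proof is correct, but it takes a genuinely different route from the paper's. The paper verifies that the stated formula satisfies the recursion \eqref{eq:recursion_prob_mass_func} together with the initial conditions \eqref{eq:initial conditions} and sums to one, the key step being the Pascal-type identity $c_{j,k}(n+2)=c_{j+1,k}(n+2)+c_{j+1,k-1}(n+2)$; uniqueness of the solution of the recursion then finishes the argument. You instead derive the formula directly by enumerating sample paths, grouping them by the boundary pair $(Y_1,Y_n)$ and by the number of maximal runs, so that the composition counts $\binom{j-1}{k}$, $\binom{n-j-1}{\,\cdot\,}$ and the factor $\delta^k=\bigl(ab/((1-a)(1-b))\bigr)^k$ appear with a transparent meaning. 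I checked the block counts, transition counts and path probabilities in all four cases: they match the three terms of $c_{j-1,k}(n)$ exactly (using $n-j-1=n-2-(j-1)$ and the convention $\binom{m}{-1}=0$ to kill the degenerate configurations), with $(\f,\f)$ and $(\ab,\ab)$ giving the outer terms and the two mixed cases combining into the middle one, as you say. Your derivation has the advantage of explaining the origin of $\delta$ and of the three-term structure of $c_{j,k}(n)$, and as a by-product it yields the conditional decomposition underlying $\jf_n$ in Proposition \ref{prop:fn_F}. The paper's verification is drier but records along the way the identity \eqref{eq:recursion_c_jk}, which is reused in the proof of Proposition \ref{prop:unimodal}; with your approach that identity still needs a separate (one-line, Pascal's rule) check from the definition of $c_{j,k}(n)$.
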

\begin{proof}
  It is easy to see that the recursion equation (\ref{eq:recursion_prob_mass_func})
  with initial conditions (\ref{eq:initial conditions}) has a unique solution. We only need to check
   that $f_n$ presented in the proposition satisfies the equations (\ref{eq:recursion_prob_mass_func}) and  (\ref{eq:initial conditions}),
    and that the summation of $f_n(j)$ from $j=0$ to $n$ equals 1. It is easy to see that (\ref{eq:recursion_prob_mass_func}) holds for $j< 0$ and $j> n$. Equation (\ref{eq:recursion_prob_mass_func}) holds for $j=0$ since
  \begin{equation*}
  \begin{split}
    &~(1-b)f_{n+1}(1)+(1-a)f_{n+1}(0)-(1-a-b)f_n(0)\\
    =&~(1-b)^{n+1}c_{0,0}(n+1)+\In_\ab(1-a)(1-b)^n-\In_\ab(1-a-b)(1-b)^{n-1}\\
    =&~(1-b)^{n-1}\big((1-b)(\In_\f a+\In_\ab b)+(n-1)\In_\ab a b+\In_\ab (1-a)(1-b)-\In_\ab(1-a-b)\big)\\
    =&~(1-b)^{n-1}\big((1-b)(\In_\f a+\In_\ab b)+n\In_\ab a b\big)=\,f_{n+2}(1).
    \end{split}
  \end{equation*}
  Similarly, Equation (\ref{eq:recursion_prob_mass_func}) holds for $j=n$.

   Suppose now $1\le j\le n-1$. From simple properties of the binomial coefficients in $c_{j,k}(n)$ it follows that
$$
c_{j-1,k}(n)=c_{j,k}(n+1)= c_{j+1,k}(n+2),
$$ and
\begin{equation}\label{eq:recursion_c_jk}
c_{j,k}(n+2)=c_{j+1,k}(n+2)+c_{j+1,k-1}(n+2).
\end{equation}
We write $c_{j,k}:=c_{j,k}(n+2)$ for short. Thus
\begin{equation*}
\begin{split}
  &~(1-b)f_{n+1}(j+1)+(1-a)f_{n+1}(j)-(1-a-b)f_{n}(j)\\
  =&~(1-b)^{n+1-j}(1-a)^{j}\sum_{k=0}^{j}\binom{j}{k}\delta^k c_{j+1,k}+(1-b)^{n+1-j}(1-a)^{j}\sum_{k=0}^{j}\binom{j-1}{k}\delta^k c_{j,k}\\
   &\hspace{5.3cm}-(1-\delta)(1-b)^{n+1-j}(1-a)^{j}\sum_{k=0}^{j}\binom{j-1}{k}\delta^k c_{j+1,k}\\
  =&~(1-b)^{n+1-j}(1-a)^{j}\left[\sum_{k=0}^{j}\binom{j-1}{k}\delta^{k}c_{j+1,k}
  +\sum_{k=0}^{j}\binom{j-1}{k-1}\delta^{k}c_{j+1,k}\right.\\
   &\hspace{2cm}\left.+\sum_{k=0}^{j}\binom{j-1}{k}\delta^k c_{j,k}-\sum_{k=0}^{j}\binom{j-1}{k}\delta^k c_{j+1,k}+\sum_{k=0}^{j}\binom{j-1}{k-1}\delta^k c_{j+1,k-1}\right]\\
  =&~(1-b)^{n+1-j}(1-a)^{j}\left[\sum_{k=0}^{j}\binom{j-1}{k-1}\delta^{k}(c_{j+1,k}+c_{j+1,k-1})+\sum_{k=0}^{j}\binom{j-1}{k}\delta^k c_{j,k}\right]\\
  =&~(1-b)^{n+1-j}(1-a)^{j}\left[\sum_{k=0}^{j}\binom{j-1}{k-1}\delta^{k}c_{j,k}+\sum_{k=0}^{j}\binom{j-1}{k}\delta^k c_{j,k}\right]\\
  =&~(1-b)^{n+1-j}(1-a)^{j}\sum_{k=0}^{j}\binom{j}{k}\delta^{k}c_{j,k}=f_{n+2}(j+1).
  \end{split}
\end{equation*}
Now we are going to show by induction that  $\sum_{j=0}^n f_n(j)=1$
for each $n\ge 1$. For $n=1$ and $2$, we have $
f_1(0)+f_1(1)=\In_\ab+\In_\f=1,$ and
$$
f_2(0)+f_2(1)+f_2(2)=\In_\ab(1-b)+\In_\ab b+\In_\f a+\In_\f(1-a)=1.
$$
Suppose $f_n$ and $f_{n+1}$ are probability mass functions, then by Equation (\ref{eq:recursion_prob_mass_func}) \begin{equation*}
\begin{split}
 \sum_{j=0}^{n+2}
 f_{n+2}(j)&=(1-a)\sum_{j=0}^{n+2}f_{n+1}(j)+(1-b)\sum_{j=0}^{n+2}f_{n+1}(j-1)-(1-a-b)\sum_{j=0}^{n+2}f_{n}(j-1)\\
 &=(1-a)+(1-b)-(1-a-b)=1.
 \end{split}
\end{equation*}
This completes the proof.
\end{proof}

\begin{ex}
 Let $n=200, a=0.01, b=0.03$ and $\In=(0.1,
0.9)$. By Proposition \ref{prop:fn} we obtain the probability mass
function of $K_{200}$ shown in Figure \ref{pic0}. Apparently
$f_{200}$ is trimodal.
\begin{figure}[h]
 \centering{ \includegraphics[width=8cm]{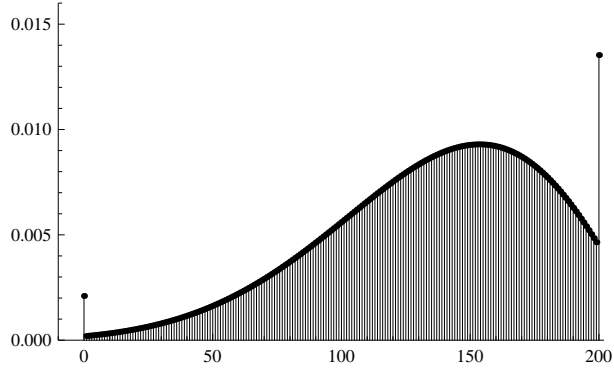}\\
  \caption{Probability mass function  $f_{200}$ of $K_{200}$ with $a=0.01, b=0.03$ and $\In=(0.1, 0.9)$.}\label{pic0}}
\end{figure}
\end{ex}

A finite sequence of real numbers $\{x_i\}_{i=0}^n$ is said to be
\emph{unimodal} if there exists an index $0\le n^*\le n$, called a
\emph{mode} of the sequence, such that $x_0\le x_1\le\dots\le
x_{n^*}$ and $x_{n^*}\ge x_{n^*+1}\ge\dots\ge x_n$. In particular,
we call the sequence $\{x_i\}_{i=0}^n$ \emph{strictly unimodal} if
all modes $n^*$ satisfy $0<n^*<n$. From the definition it is easy to
see that a monotonic sequence is unimodal.

A nonnegative sequence $\{x_i\}_{i=0}^n$ is called
\emph{log-concave} (or \emph{strictly log-concave}) if $x_{i-1}x_{i+1}\le x_i^2$ (or $x_{i-1}x_{i+1}< x_i^2$) for all $1\le i\le
n-1$. It is well known that the sequence $\{x_i\}_{i=0}^n$ is
log-concave if and only if $x_{i_1-1}x_{i_2+1}\le x_{i_1} x_{i_2}$
for all $1\le i_1\le i_2\le n-1$. Moreover, log-concavity implies unimodality.

The definitions of unimodality and log-concavity can be extended
naturally  to infinite sequences.

\begin{prop}
  Let $a+b\ge 1$, and let $f_n$ be the probability mass function of a $Bin(n,a ,b,\In)$ distributed random variable $K_n$. Then the sequence $\{f_n(j)\}_{j=0}^n$ is log-concave, and hence unimodal. Moreover, the mode $n^*$ satisfies $\lfloor\expec{K_n}\rfloor\le n^*\le \lceil\expec{K_n}\rceil$.
\end{prop}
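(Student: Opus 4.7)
The plan is to split the proof into two parts: log-concavity first, then the mode bound. For log-concavity, I would refine to $u_n(j):=\prob{K_n=j,\,Y_n=\f}$ and $v_n(j):=\prob{K_n=j,\,Y_n=\ab}$, which satisfy the single-step recursion
\begin{equation*}
u_{n+1}(j)=(1-a)\,u_n(j-1)+b\,v_n(j-1),\qquad v_{n+1}(j)=a\,u_n(j)+(1-b)\,v_n(j),
\end{equation*}
with nonnegative coefficients irrespective of $a+b$. Since $f_n=u_n+v_n$ and log-concavity is not preserved under sums, I cannot prove log-concavity of $u_n$ and $v_n$ in isolation; the strategy is instead to establish, by simultaneous induction on $n$, the five joint invariants: (I)~$u_n$ is log-concave; (II)~$v_n$ is log-concave; (III)~$u_n(j)v_n(j)\ge u_n(j-1)v_n(j+1)$; (IV)~$u_n(j)v_n(j)\ge u_n(j+1)v_n(j-1)$; (V)~$u_n(j+1)v_n(j)\ge u_n(j)v_n(j+1)$. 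The base case $n=1$ is trivial since each of $u_1,v_1$ has a single nonzero entry.

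For the inductive step, substituting the recursion into each invariant at level $n+1$ and grouping by the four products $a(1-a),(1-a)(1-b),ab,b(1-b)$ reduces (I), (II), (III), (V) at level $n+1$ to nonnegative combinations of (I)--(IV) at level $n$ via routine manipulation. The decisive computation is for (IV), whose expansion telescopes cleanly to
\begin{equation*}
u_{n+1}(j)v_{n+1}(j)-u_{n+1}(j+1)v_{n+1}(j-1)=(a+b-1)\bigl[u_n(j)v_n(j-1)-u_n(j-1)v_n(j)\bigr],
\end{equation*}
which is exactly $(a+b-1)$ times (V) at level $n$ and shift $j-1$. This is the \emph{only} point where the assumption $a+b\ge 1$ is essential. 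Log-concavity of $f_n$ then follows by writing $f_n(j)^2-f_n(j-1)f_n(j+1)$ as the sum of $[u_n(j)^2-u_n(j-1)u_n(j+1)]$, $[v_n(j)^2-v_n(j-1)v_n(j+1)]$ and $[2u_n(j)v_n(j)-u_n(j-1)v_n(j+1)-u_n(j+1)v_n(j-1)]$; the first two are nonnegative by (I), (II), and the last by summing (III) and (IV).

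For the mode bound, log-concavity alone is insufficient (a truncated geometric distribution is log-concave but can have mode $0$ with arbitrarily large mean), so the plan is to strengthen to \emph{real-rootedness} of the probability generating function $\Phi_n(z):=\expec{z^{K_n}}$. Applying Cayley--Hamilton to the transfer matrix $T(z)=\left(\begin{smallmatrix}(1-a)z & a\\ bz & 1-b\end{smallmatrix}\right)$ yields the three-term recurrence
\begin{equation*}
\Phi_{n+2}(z)=[(1-a)z+(1-b)]\,\Phi_{n+1}(z)+z(a+b-1)\,\Phi_n(z),
\end{equation*}
both polynomial coefficients being nonnegative when $a+b\ge 1$. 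I would then show by induction that $\Phi_n$ has only real, nonpositive roots, interlacing with those of $\Phi_{n-1}$: evaluating at a root $\alpha<0$ of $\Phi_{n+1}$ gives $\Phi_{n+2}(\alpha)=\alpha(a+b-1)\Phi_n(\alpha)$, whose sign alternates across consecutive roots by the interlacing hypothesis, yielding enough sign changes for $\Phi_{n+2}$ to have its required real roots. Once real-rootedness is established, $K_n$ equals in distribution a sum of $n$ independent Bernoulli variables, and the bound $\lfloor\expec{K_n}\rfloor\le n^*\le \lceil\expec{K_n}\rceil$ follows from Darroch's theorem on Poisson binomials. The hard part will be the interlacing step: the coefficient of $\Phi_n$ in the recurrence is $+z(a+b-1)$ rather than the $-C$ of the standard orthogonal-polynomial recurrence, so one cannot directly invoke Favard's theorem, and one must track the two extreme roots of $\Phi_{n+2}$ by carefully analyzing the behavior at $z=0$ and $z\to-\infty$.
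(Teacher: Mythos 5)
Your overall route is sound and, for the mode bound, lands on essentially the same mechanism as the paper, but you get there differently. The paper's entire proof is short: it derives the same three-term recurrence $G_{n+2}(s)=\bigl((1-a)s+(1-b)\bigr)G_{n+1}(s)-(1-a-b)sG_n(s)$ for the generating function and then simply cites a known corollary (Corollary 2.4 of the reference on zeros of polynomials satisfying such recurrences) to conclude that $G_n$ is real-rooted; log-concavity and the mode bound $\lfloor\expec{K_n}\rfloor\le n^*\le\lceil\expec{K_n}\rceil$ then follow at once (Newton's inequalities and Darroch's theorem, as you say). You instead (a) give a from-scratch combinatorial induction for log-concavity via the joint invariants (I)--(V) on $u_n,v_n$, and (b) reprove the real-rootedness by an interlacing/Sturm argument rather than citing it. Part (a) is a genuinely different and more elementary argument --- your telescoping identity for (IV), which isolates $a+b\ge 1$ as the single place the hypothesis enters, is correct (I checked the expansion: the $a(1-a)$ and $b(1-b)$ terms cancel and the cross terms give $(1-a-b)[u_n(j-1)v_n(j)-u_n(j)v_n(j-1)]$) --- but it is rendered redundant by (b), since real-rootedness already yields log-concavity. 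In (b), your worry about the sign of the $z(a+b-1)\Phi_n(z)$ term is overstated: all roots lie on the negative real axis (the coefficients are nonnegative, and $\Phi_n(0)=f_n(0)>0$ for nondegenerate $\In$), so at a root $\alpha<0$ of $\Phi_{n+1}$ the factor $\alpha(a+b-1)$ is $\le 0$ and you recover exactly the classical sign alternation of the orthogonal-polynomial argument; the degenerate cases $a+b=1$ (i.i.d.\ chain) and $\In_\f\in\{0,1\}$ (degree drop or root at $0$) need a sentence each.

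The one step I would not accept as ``routine manipulation'' is the inductive step for invariant (III). Expanding $u_{n+1}(j)v_{n+1}(j)-u_{n+1}(j-1)v_{n+1}(j+1)$ produces the cross terms $(1-a)(1-b)\bigl[u_n(j-1)v_n(j)-u_n(j-2)v_n(j+1)\bigr]$ and $ab\bigl[u_n(j)v_n(j-1)-u_n(j+1)v_n(j-2)\bigr]$, which are \emph{two-step} versions of (III) and (IV) and are not literally among your level-$n$ hypotheses. They do follow --- e.g.\ multiply (III) at index $j-1$ by the log-concavity inequality $v_n(j)^2\ge v_n(j-1)v_n(j+1)$ and cancel $v_n(j-1)v_n(j)$ --- but this cancellation requires positivity, so you must separately handle the zero entries (the supports of $u_n$ and $v_n$ are different intervals, $[1,n]$ versus $[0,n-1]$, so boundary indices genuinely hit zeros). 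This is a fixable gap of rigor, not a wrong idea, but as written the induction does not close.
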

\begin{proof}
 Let $G_n$ be the generating function of $K_n$, i.e., for all real $s$
  $$
  G_n(s)=\expec{s^{K_n}}=\sum_{j=0}^n f_n(j)s^j.
  $$
 Without loss of generality we suppose $0<\In_\f<1$. Then (by Proposition \ref{prop:fn}) $ G_n$ has positive coefficients.  It follows from the recursion equation (\ref{eq:recursion_prob_mass_func}) that
  $$
  G_{n+2}(s)=\big((1-a)s+(1-b)\big)G_{n+1}(s)-(1-a-b)s G_{n}(s).
  $$
Since $a+b\ge 1$, we obtain by Corollary 2.4 of \cite{ZEROS} that
for each $n\ge 1$ all zeros of $G_n$ are real. Thus the sequence
$\{f_n(j)\}_{j=0}^n$ is log-concave and hence unimodal with mode
$n^*$ between $\lfloor\expec{K_n}\rfloor$ and
$\lceil\expec{K_n}\rceil$.
\end{proof}

When $a+b<1$, Figure \ref{pic0} suggests that the probability mass
function $\{f_n(j)\}_{j=0}^n$ is not unimodal. However, Figure
\ref{pic0} also suggests that $\{f_n(j)\}_{j=1}^{n-1}$ is unimodal.
We will indeed show in Proposition \ref{prop:unimodal} that the
sequence $\{f_n(j)\}_{j=1}^{n-1}$ is log-concave, implying
unimodality. In order to prove Proposition \ref{prop:unimodal} it is
helpful to use the following lemma which can be derived directly
 from Lemma 2.2 and Proposition 2.4 of \cite{LOGCONCAVITY}. To be more self-contained, we give a proof by using  simple properties of binomial coefficients and log-concave sequences.

\begin{lem}\label{lemm:LC-positive}
  For any positive integer $j$ and a nonnegative log-concave sequence $\{x_k\}_k$, let $d_{j,k}:=\binom{j}{k}x_k$. Then for any $0\le 2\ell\le m\le 2j$,
  $$
\sum_{k=\ell}^{\lfloor m/2\rfloor}D_{j,k}(m)\ge 0,
  $$
  where for $k<m/2$
  $$
  D_{j,k}(m)=2 \,d_{j,k}\,d_{j,m-k}-d_{j-1,k}\,d_{j+1,m-k}-d_{j+1,k}\,d_{j-1,m-k},
  $$
and for $m$ even and $k=m/2$
  $$
  D_{j,k}(m)=d_{j,k}^2-d_{j-1,k}\,d_{j+1,k} .
  $$

\end{lem}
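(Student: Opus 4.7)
The plan is to recast the partial sum as $S_1-S_2$, where
\[
S_1:=\sum_{k=\ell}^{m-\ell}d_{j,k}\,d_{j,m-k},\qquad S_2:=\sum_{k=\ell}^{m-\ell}d_{j-1,k}\,d_{j+1,m-k},
\]
and to prove $S_1\ge S_2$ by combining Pascal's rule, Abel summation, and an involution. The identity $\sum_{k=\ell}^{\lfloor m/2\rfloor}D_{j,k}(m)=S_1-S_2$ falls out once both sums are grouped under the involution $k\leftrightarrow m-k$: each off-diagonal pair $\{k,m-k\}$ with $k<m/2$ produces the cross-term definition of $D_{j,k}(m)$, while the fixed point $k=m/2$ (when $m$ is even) reproduces $D_{j,m/2}(m)$.

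Pascal's rule yields the algebraic identity
\[
\binom{j}{k}\binom{j}{m-k}-\binom{j-1}{k}\binom{j+1}{m-k}=\binom{j-1}{k-1}\binom{j}{m-k}-\binom{j-1}{k}\binom{j}{m-k-1},
\]
so that, writing $T_k:=\binom{j-1}{k-1}\binom{j}{m-k}$ (and hence $T_{k+1}=\binom{j-1}{k}\binom{j}{m-k-1}$),
\[
S_1-S_2=\sum_{k=\ell}^{m-\ell}(T_k-T_{k+1})\,x_k x_{m-k}.
\]
A standard Abel summation, together with $x_\ell x_{m-\ell}=x_{m-\ell}x_\ell$, gives
\[
S_1-S_2=(T_\ell-T_{m-\ell+1})\,x_\ell x_{m-\ell}+\sum_{k=\ell+1}^{m-\ell}T_k\bigl(x_k x_{m-k}-x_{k-1}x_{m-k+1}\bigr).
\]

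The key step is that the map $k\mapsto m-k+1$ is an involution on $\{\ell+1,\dots,m-\ell\}$ that sends the factor $x_k x_{m-k}-x_{k-1}x_{m-k+1}$ to its own negative. Grouping the bulk sum by pairs $\{k,m-k+1\}$ yields
\[
\sum_{k=\ell+1}^{\lceil m/2\rceil}(T_k-T_{m-k+1})\bigl(x_k x_{m-k}-x_{k-1}x_{m-k+1}\bigr),
\]
with the self-paired term (present only for $m$ odd, at $k=(m+1)/2$) vanishing automatically. For every $k\le(m+1)/2$ two elementary inequalities now force each summand to be nonnegative: the "spread" form of log-concavity of $\{x_k\}$ gives $x_k x_{m-k}\ge x_{k-1}x_{m-k+1}$, because $(k,m-k)$ is more balanced than $(k-1,m-k+1)$; and a direct binomial computation gives $T_k/T_{m-k+1}=(j-k+1)/(j-m+k)\ge 1$ whenever both sides are positive, the degenerate cases where one of $T_k,T_{m-k+1}$ vanishes being trivial. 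Since $2\ell\le m$ forces $\ell\le(m+1)/2$, the very same reasoning shows the boundary term is nonnegative, completing the proof.

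The main obstacle is that term-by-term nonnegativity of $D_{j,k}(m)$ fails: already for $x_k\equiv 1$ the coefficient $2\binom{j}{k}\binom{j}{m-k}-\binom{j-1}{k}\binom{j+1}{m-k}-\binom{j-1}{m-k}\binom{j+1}{k}$ changes sign with $k$. What makes the partial sums nonnegative is the Pascal-rule rewriting, which exposes the telescoping structure $T_k-T_{k+1}$, together with the involution $k\leftrightarrow m-k+1$ that reorganizes the Abel-summed expression into pairs in which both factors have the correct sign.
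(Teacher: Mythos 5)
Your proof is correct and is essentially the paper's own argument: the same Pascal-rule telescoping, the same Abel summation, and the same two termwise inequalities (the binomial ratio bound and the spread form of log-concavity); indeed your $T_k-T_{m-k+1}$ is exactly the bracketed coefficient $\binom{j-1}{k-1}\binom{j}{m-k}-\binom{j}{k-1}\binom{j-1}{m-k}$ that the paper obtains by rewriting $D_{j,k}(m)$ directly, so after folding the symmetric range back via your involution the two final expressions coincide term by term. The only cosmetic differences are that you first unfold the sum to the full range $\ell\le k\le m-\ell$ before summing by parts, and that you handle both parities of $m$ at once where the paper writes out only the odd case.
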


\begin{proof}
  Note that for $k<m/2$
  \begin{equation*}
    \begin{split}
      D_{j,k}(m)=&~\Big[2\binom{j}{k}\binom{j}{m-k}-\binom{j-1}{k}\binom{j+1}{m-k}-\binom{j+1}{k}\binom{j-1}{m-k}\Big]x_k x_{m-k}\\
      =&~\Big[\binom{j-1}{k-1}\binom{j}{m-k}-\binom{j}{k-1}\binom{j-1}{m-k}\Big]x_k x_{m-k}\\
      &\hspace{1cm}-\Big[\binom{j-1}{k}\binom{j}{m-k-1}-\binom{j}{k}\binom{j-1}{m-k-1}\Big]x_k x_{m-k}.
    \end{split}
  \end{equation*}
  For brevity, we only show the lemma for $m$ odd. Let $m=2s+1$. Then for $0\le \ell\le s<j$,
  \begin{equation*}
    \begin{split}
      \sum_{k=\ell}^s D_{j,k}(m)=&\sum_{k=\ell-1}^{s-1}\Big[\binom{j-1}{k}\binom{j}{m-k-1}-\binom{j}{k}\binom{j-1}{m-k-1}\Big]x_{k+1}x_{m-k-1}\\
      &-\sum_{k=\ell}^s\Big[\binom{j-1}{k}\binom{j}{m-k-1}-\binom{j}{k}\binom{j-1}{m-k-1}\Big]x_k x_{m-k}\\
      =&~\sum_{k=\ell}^{s-1}\Big[\binom{j-1}{k}\binom{j}{m-k-1}-\binom{j}{k}\binom{j-1}{m-k-1}\Big](x_{k+1}x_{m-k-1}-x_k x_{m-k})\\
      &+\Big[\binom{j-1}{\ell-1}\binom{j}{m-\ell}-\binom{j}{\ell-1}\binom{j-1}{m-\ell}\Big]x_\ell x_{m-\ell}\;      \ge\; 0,
    \end{split}
  \end{equation*}
  where the last inequality holds since $\binom{j-1}{k}\binom{j}{m-k-1}\ge\binom{j}{k}\binom{j-1}{m-k-1}$ for $k\le s-1$ and the sequence $\{x_k\}_k$ is log-concave.
This completes the proof of the lemma.
\end{proof}

Inspired by the proof of Theorem 3.10 of \cite{LOGCONCAVITY}, we are going to use Lemma \ref{lemm:LC-positive} to show the log-concavity of an important class of sequences.

\begin{lem}\label{lemma:log-concave}
  Let $\delta>0$ and $\{c_{j,k}\}_{j,k\in\mathbb{Z}}$ be an nonnegative double sequence satisfying
  $$
  c_{j,k}=c_{j+1,k}+c_{j+1,k-1},
  $$
  and $c_{j,k}=0$ for all $j\in\mathbb{Z}$ and $k\le -2$.
  Then the sequence
  $$
  \Big\{\sum_{k=0}^j\binom{j}{k}\delta^k c_{j,k}\Big\}_{j\ge 0}
  $$
  is log-concave.
\end{lem}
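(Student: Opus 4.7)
The plan is to express $A_j^2-A_{j-1}A_{j+1}$ as a manifestly nonnegative double sum by placing $A_{j-1}$, $A_j$, $A_{j+1}$ on a common footing and then invoking Lemma \ref{lemm:LC-positive}. Using the recursion $c_{j,k}=c_{j+1,k}+c_{j+1,k-1}$ once and twice, I rewrite each quantity as a weighted sum over the same nonnegative sequence $\{c_{j+1,k}\}_k$:
\begin{align*}
A_{j+1}&=\sum_k \binom{j+1}{k}\delta^k c_{j+1,k},\\
A_j&=\sum_k\bigl[\binom{j}{k}+\delta\binom{j}{k+1}\bigr]\delta^k c_{j+1,k},\\
A_{j-1}&=\sum_k\bigl[\binom{j-1}{k}+2\delta\binom{j-1}{k+1}+\delta^2\binom{j-1}{k+2}\bigr]\delta^k c_{j+1,k},
\end{align*}
the weights in the last two lines being the coefficient extractions $[w^{k+i}](w+\delta)^i(1+w)^{j+1-i}$ for $i=1,2$.

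Expanding $A_j^2-A_{j-1}A_{j+1}$ and symmetrising over the two summation indices $(k_1,k_2)$ then produces a bilinear form
\[
A_j^2-A_{j-1}A_{j+1}=\sum_{k_1,k_2}c_{j+1,k_1}c_{j+1,k_2}\,\delta^{k_1+k_2}\,M(k_1,k_2),
\]
where $M(k_1,k_2)$ is a polynomial in $\delta$ whose coefficients are signed products of three binomials, one from each of the rows $j-1,j,j+1$. I then regroup this double sum by the value $m=k_1+k_2$ and aim to match each resulting block to an expression of the form $\sum_{k=0}^{\lfloor m/2\rfloor}D_{j,k}(m)$ controlled by Lemma \ref{lemm:LC-positive}, applied with $\{x_k\}_k$ a nonnegative log-concave sequence built from $\{\delta^k\}_k$ times a suitably shifted column of the binomial triangle. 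Each of these factors is log-concave in $k$ and log-concavity is preserved by pointwise multiplication, so the resulting $\{x_k\}_k$ is admissible for the lemma.

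The hard step will be choosing the precise shifted-binomial sequence and the block decomposition so that the $D_{j,k}(m)$'s reproduce $M(k_1,k_2)$ exactly: the asymmetry between $A_{j-1}$ (which carries a polynomial of degree two in $\delta$) and $A_{j+1}$ (degree zero) on the negative side means the terms need to be sorted by total $\delta$-degree before the lemma applies, and one must verify that the outer nonnegative weights $c_{j+1,k_1}c_{j+1,k_2}$ can be absorbed without disturbing the inequality. Once this alignment is in place, Lemma \ref{lemm:LC-positive} taken with $\ell=0$ gives $\sum_{k=0}^{\lfloor m/2\rfloor}D_{j,k}(m)\ge 0$ for every $m$, and summing over $m$ yields $A_j^2\ge A_{j-1}A_{j+1}$, establishing the log-concavity of $\{A_j\}_{j\ge 0}$.
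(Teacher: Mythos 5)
Your setup coincides with the paper's: both proofs use the recursion to put $A_{j-1}$, $A_j$, $A_{j+1}$ on the common footing of the row $\{c_{j+1,k}\}_k$, expand $A_j^2-A_{j-1}A_{j+1}$ as a quadratic form in products $c_{j+1,k_1}c_{j+1,k_2}$, group by $m=k_1+k_2$, and aim to bound each block below by $0$ via Lemma \ref{lemm:LC-positive}. (Absorbing the outer weights is unproblematic and needs no verification beyond their nonnegativity: once each coefficient block $\sum_{k}e_{j,k}(m)$ is shown nonnegative, the double sum is nonnegative term by term in $m$.) But what you label ``the hard step'' is precisely the entire content of the proof, and your proposal leaves it unexecuted. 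The paper resolves it by taking $x_k=\delta^k$ in Lemma \ref{lemm:LC-positive} --- so that the $d_{j,k}=\binom{j}{k}\delta^k$ there are exactly the weights appearing after expansion --- and then splitting the block coefficient as $e_{j,k}(m)=P_k+2Q_k+R_k$, where $\sum_k P_k$, $\sum_k Q_k$, $\sum_k R_k$ are recognized (after index shifts) as the sums $\sum_k D_{j,k}(m-2)$, $\sum_k D_{j,k}(m-1)$, $\sum_k D_{j,k}(m)$, each nonnegative by Lemma \ref{lemm:LC-positive}; a few boundary terms such as $d_{j-1,0}d_{j+1,m}$ must be tracked separately. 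Without exhibiting this three-way decomposition, the claim that the blocks ``match'' expressions controlled by Lemma \ref{lemm:LC-positive} is an assertion, not an argument.

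Moreover, the one concrete hint you give for closing the gap points in the wrong direction. You propose applying Lemma \ref{lemm:LC-positive} with $x_k$ equal to $\delta^k$ times ``a suitably shifted column of the binomial triangle.'' But in Lemma \ref{lemm:LC-positive} the binomial factors $\binom{j-1}{\cdot},\binom{j}{\cdot},\binom{j+1}{\cdot}$ are already built into the $d_{j,k}$'s with \emph{row} indices tied to $j$; they are not a single column that could be folded into $x_k$, and multiplying $x_k$ by a binomial column would double-count them and destroy the match with the signed three-row products in $D_{j,k}(m)$. The correct choice is simply $x_k=\delta^k$ (which is log-concave, indeed log-linear), and the extra binomials generated by your weights $\binom{j}{k}+\delta\binom{j}{k+1}$, etc., are handled not by modifying $x_k$ but by the regrouping into the three shifted families described above. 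As it stands, the proposal is a correct plan with the decisive combinatorial identity missing and the suggested mechanism for supplying it flawed.
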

\begin{proof}
 We fix $j\ge 1$. Let $d_{j,k}:=\binom{j}{k}\delta^k$. We have to show that $z_j^2\ge z_{j-1}z_{j+1}$ where
  $$
z_j:=\sum_{k=0}^{j}\binom{j}{k}\delta^k c_{j,k}=\sum_{k=0}^j d_{j,k}\,c_{j,k}.
  $$
We use the short notation $v_k:=c_{j+1,k}$. Since
$c_{j,k}=c_{j+1,k}+c_{j+1,k-1}$, this yields
\begin{equation*}
 z_{j+1}=\sum_{k=0}^{j+1}d_{j+1,k}v_k,\quad z_j=\sum_{k=0}^{j}d_{j,k}(v_k+v_{k-1}), \quad
  z_{j-1}=\sum_{k=0}^{j-1}d_{j-1,k}(v_k+2\, v_{k-1}+v_{k-2}).
\end{equation*}
Note that $v_k=c_{j+1,k}=0$ for all $j$ and $k\le -2$ and
$d_{j,k}=0$ for $k<0$ or $k>j$, by the definition of $\binom{j}{k}$.
Rewrite
\begin{equation*}\begin{split}
  z_{j+1}&=\sum_{k=0}^{j+2} d_{j+1,k-1}v_{k-1},\quad z_{j}=\sum_{k=0}^{j+2}\big(d_{j,k-1}+d_{j,k}\big)v_{k-1}, \\ z_{j-1}&=\sum_{k=0}^{j+2}\big(d_{j-1,k-1}+2\,d_{j-1,k}+d_{j-1,k+1}\big)v_{k-1}.
\end{split}\end{equation*}
  Then $z_j^2-z_{j-1}z_{j+1}$ can be rewritten in a quadratic form of $j+3$ variables $v_{-1}, v_0, v_1,\dots,
  v_{j+1}$:
  $$
  z_j^2-z_{j-1}z_{j+1}=\sum_{m=0}^{2 (j+2)}\sum_{k=0}^{\lfloor m/2\rfloor}e_{j,k}(m)v_{k-1} v_{m-k-1},
  $$
  where
  \begin{equation*}
  \begin{split}
  e_{j,k}(m)=&~2\,\big(d_{j,k-1}+d_{j,k}\big)\big(d_{j,m-k-1}+d_{j,m-k}\big)\\
  &~-\big(d_{j-1,k-1}+2 \,d_{j-1,k}+d_{j-1,k+1}\big)d_{j+1,m-k-1}\\
  &~-d_{j+1,k-1}\big(d_{j-1,m-k-1}+2\, d_{j-1,m-k}+d_{j-1,m-k+1}\big).
  \end{split}
 \end{equation*}
 Since the $v_k$'s are all nonnegative, it suffices to show that $
 \sum_{k=0}^{\lfloor m/2\rfloor} e_{j,k}(m)\ge 0, $ for all $0\le m\le 2(j+2)$.
 Rewrite
 $$
 e_{j,k}(m)=P_k+2 \,Q_k+R_k,
 $$
 where
 \begin{equation*}\begin{split}
   P_k&=2\, d_{j,k-1}\,d_{j,m-k-1}-d_{j-1,k-1}\,d_{j+1,m-k-1}-d_{j+1,k-1}\,d_{j-1,m-k-1},\\
   Q_k&=d_{j,k-1}\,d_{j,m-k}+d_{j,k}\,d_{j,m-k-1}-d_{j-1,k}\,d_{j+1,m-k-1}-d_{j+1,k-1}\,d_{j-1,m-k},\\
   R_k&=2\, d_{j,k}\,d_{j,m-k}-d_{j-1,k+1}\,d_{j+1,m-k-1}-d_{j+1,k-1}\,d_{j-1,m-k+1}.
 \end{split}\end{equation*}
 Then we only need to show that
 \begin{equation*}
 \sum_{k=0}^{\lfloor m/2\rfloor} P_k\ge 0,\quad\sum_{k=0}^{\lfloor m/2\rfloor} Q_k\ge 0,\quad\sum_{k=0}^{\lfloor m/2\rfloor} R_k\ge 0.
 \end{equation*}
For brevity, we show this only for the case $m$ is odd. For $m$ even
the proof is very similar, but somewhat longer. Let $m=2 s+1$. It
follows from  Lemma \ref{lemm:LC-positive} that
$$
\sum_{k=0}^s P_k=\sum_{k=0}^s
D_{j,k-1}(m-2)=\sum_{k=0}^{s-1}D_{j,k}(m-2)\ge 0,
$$
where the second equality holds since $D_{j,k}(m-2)=0$ for $k<0$.
 Recalling from Lemma \ref{lemm:LC-positive} that $D_{j,s}(m-1)=d^2_{j,s}-d_{j-1,s}\, d_{j+1,s}$ we also have
\begin{equation*}
\begin{split}
  \sum_{k=0}^s Q_k&=\sum_{k=-1}^{s-1}(d_{j,k}\,d_{j,m-k-1}-d_{j+1,k}\,d_{j-1,m-k-1})+\sum_{k=0}^s (d_{j,k}\,d_{j,m-k-1}-d_{j-1,k}\,d_{j+1,m-k-1}) \\
  &=\sum_{k=0}^s D_{j,k}(m-1)\ge 0.
  \end{split}
\end{equation*}
Moreover,
 \begin{equation*}\begin{split}
   \sum_{k=0}^{s} R_k&=2\,\sum_{k=0}^{s} d_{j,k}\,d_{j,m-k}-\sum_{k=1}^{s+1}d_{j-1,k}\,d_{j+1,m-k}-\sum_{k=-1}^{s-1}d_{j+1,k}\,d_{j-1,m-k}\\
   &=\sum_{k=0}^{s}D_{j,k}(m)+d_{j-1,0}\,d_{j+1,m}\ge\sum_{k=0}^{s}D_{j,k}(m)\ge 0.
 \end{split}\end{equation*}
 This finishes the proof of the lemma.
 \end{proof}

\begin{prop}\label{prop:unimodal}
  For any $Bin(n,a,b,\In)$ distributed random variable $K_n$, let $f_n$ be its probability mass function. Then the sequence $\{f_{n}(j)\}_{j=1}^{n-1}$ is
  log-concave.
\end{prop}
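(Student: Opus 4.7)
The plan is to reduce the statement directly to Lemma \ref{lemma:log-concave} via the closed form provided by Proposition \ref{prop:fn}.

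First I would observe that, for $1 \le j \le n-1$,
\[
f_n(j) = (1-b)^{n-j}(1-a)^{j-1}\,z_{j-1},\qquad z_i := \sum_{k=0}^{i}\binom{i}{k}\delta^k c_{i,k}(n),
\]
with $\delta = ab/((1-a)(1-b)) > 0$. The geometric prefactor $(1-b)^{n-j}(1-a)^{j-1}$ is log-linear in $j$, so multiplication by it preserves log-concavity. Thus log-concavity of $\{f_n(j)\}_{j=1}^{n-1}$ is equivalent to log-concavity of $\{z_i\}_{i=0}^{n-2}$, which is exactly the output of Lemma \ref{lemma:log-concave} provided we verify its hypotheses for the double sequence $\{c_{j,k}(n)\}$ with $n$ fixed.

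Next I would check those hypotheses one by one. Nonnegativity of $c_{j,k}(n)$ follows immediately from its formula together with $\nu_\f,\nu_\ab\ge 0$, $0<a,b<1$, and the nonnegativity of binomial coefficients (noting that only $0\le j\le n-2$ is relevant, so $n-2-j\ge 0$). The Pascal-type recursion
\[
c_{j,k}(n) = c_{j+1,k}(n) + c_{j+1,k-1}(n)
\]
is already noted in the excerpt and follows by applying $\binom{n-2-j}{k} = \binom{n-3-j}{k} + \binom{n-3-j}{k-1}$ to each of the three summands that define $c_{j,k}(n)$. Finally, for $k\le -2$, all three binomial coefficients $\binom{n-2-j}{k-1}$, $\binom{n-2-j}{k}$, $\binom{n-2-j}{k+1}$ vanish (under the convention $\binom{m}{\ell}=0$ for $\ell<0$ and $m\ge 0$), so $c_{j,k}(n)=0$ for $k\le -2$.

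With these three properties in hand, Lemma \ref{lemma:log-concave} yields log-concavity of $\{z_i\}_{i\ge 0}$, and reassembling the prefactor completes the argument. I do not expect a serious technical obstacle: the heavy combinatorial work has been absorbed into Lemma \ref{lemma:log-concave}, and the only thing that requires genuine care is the bookkeeping that identifies the explicit $c_{j,k}(n)$ of Proposition \ref{prop:fn} as an instance of the abstract setting of the lemma, particularly the mild point that the range $0\le j\le n-2$ suffices because $f_n(0)$ and $f_n(n)$ are excluded from the claim.
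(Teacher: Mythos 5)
Your proposal is correct and follows essentially the same route as the paper: invoke the closed form of Proposition \ref{prop:fn}, strip off the log-linear geometric prefactor, verify that the $c_{j,k}(n)$ satisfy the Pascal recursion (\ref{eq:recursion_c_jk}), nonnegativity, and vanishing for $k\le -2$, and then apply Lemma \ref{lemma:log-concave}. Your write-up is in fact slightly more careful than the paper's, since you explicitly record why the prefactor preserves log-concavity and why the hypotheses of the lemma hold.
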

\begin{proof}
According to Proposition \ref{prop:fn} we have that for $1\le j\le
n-1$
\begin{equation*}
  f_n(j)=(1-b)^{n-j}(1-a)^{j-1}\sum_{k=0}^{j-1}\binom{j-1}{k}\delta^k c_{j-1,k},
\end{equation*}
where $\delta>0,$ and the double sequence
$\{c_{j,k}\}_{j,k\in\mathbb{Z}}$ satisfies the recursion equation
\begin{equation*}
c_{j,k}=c_{j+1,k}+c_{j+1,k-1}
\end{equation*}
(cf.~Equation (\ref{eq:recursion_c_jk})), and $c_{j,k}=0$ for $k\le
-2$. It follows from Lemma \ref{lemma:log-concave} that the sequence
$\{f_n(j)\}_{j=1}^{n-1}$ is log-concave.
\end{proof}
In fact we can show by sharping the proof of Lemma
\ref{lemma:log-concave} that $\{f_n(j)\}_{j=1}^{n-1}$ is strictly
log-concave, i.e., $f_{n}(j)^2>f_n(j-1)f_n(j+1)$ for
$j=2,\dots,n-2$. Proposition \ref{prop:unimodal} implies that the
shape of the probability mass function of $K_n$, which can be
unimodal, bimodal or trimodal, is determined by the following six
values:
$$f_n(0),~~ f_n(1),~~ f_n(2), ~~f_n(n-2), ~~f_n(n-1)~~\textrm{and}~~ f_n(n).$$
\begin{thm}\label{th:shape of prob mass func}
  For any $Bin(n,a,b,\In)$ distributed random variable $K_n$, let $f_n$ be its probability mass function.  Then
$f_n$ is unimodal, except that

 $f_n$ is bimodal with one peak on the left if and only if $f_n(0)> f_n(1)\le f_n(2)$
 and either $f_n(n-1)\ge f_n(n)$ or $f_n(n-2)<f_n(n-1)<f_n(n)$;

  $f_n$ is bimodal with one peak on the right if and only if  $f_n(n-2)\ge f_n(n-1)<f_n(n)$ and either $f_n(0)\le f_n(1)$or
  $f_n(0)>f_n(1)>f_n(2)$;

$f_n$ is trimodal if and only if $f_n(0)>f_n(1)\le f_n(2)$ and
$f_n(n-2)\ge f_n(n-1)<f_n(n)$.
\end{thm}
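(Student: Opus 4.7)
The plan is to leverage the strict log-concavity of $\{f_n(j)\}_{j=1}^{n-1}$ noted just after Proposition~\ref{prop:unimodal}, and then carry out a case analysis at the two boundaries. Since the interior is log-concave and hence unimodal, the only mechanism by which the full sequence $\{f_n(j)\}_{j=0}^n$ can acquire extra modes is through how the boundary values $f_n(0)$ and $f_n(n)$ compare with their two nearest interior neighbours. A first useful consequence of strict log-concavity is that the ratios $f_n(j+1)/f_n(j)$ are strictly decreasing in $j$ on $\{1,\dots,n-2\}$; this yields (i) if $f_n(1)>f_n(2)$ then $\{f_n(j)\}_{j=1}^{n-1}$ is strictly decreasing throughout, and (ii) if $f_n(n-2)<f_n(n-1)$ then $\{f_n(j)\}_{j=1}^{n-1}$ is strictly increasing throughout. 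In particular, the combination ``$f_n(1)>f_n(2)$ and $f_n(n-2)<f_n(n-1)$'' is impossible, and this eliminates one of the \emph{a priori} nine combinations.

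Next I would introduce the following terminology: there is an \emph{extra peak on the left} when $f_n(0)>f_n(1)$ and $f_n(1)\le f_n(2)$ (so that position $0$ is a local maximum separated from the interior mode by a dip at $1$), and an \emph{extra peak on the right} when $f_n(n)>f_n(n-1)$ and $f_n(n-2)\ge f_n(n-1)$ (symmetric). In any boundary configuration not of this form, the boundary value either fails to be a local maximum, or else lies at the end of a monotone run that, by (i)--(ii) above, extends across the entire interior and merges with the interior maximum rather than producing a new peak. The three exceptional hypotheses in the theorem are then precisely the cases ``left peak but no right peak'', ``right peak but no left peak'', and ``both peaks''; in each of these one reads off a genuine bimodal or trimodal profile, while in every remaining boundary configuration $f_n$ is unimodal with mode either at an interior point or at $0$ or $n$.

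The main obstacle is the bookkeeping around the ``merged'' boundary configurations: one must verify, for instance, that $f_n(n-2)<f_n(n-1)<f_n(n)$ produces a monotone extension of the interior up to $n$ rather than a separate peak, whereas $f_n(n-2)\ge f_n(n-1)<f_n(n)$ genuinely cuts the interior away from the boundary by a strict valley at $n-1$ and hence a bona fide extra peak. This is exactly where strict log-concavity is essential: it propagates the sign of $f_n(n-1)-f_n(n-2)$ (respectively, of $f_n(2)-f_n(1)$) across the whole interior, ruling out accidental interior bumps that could otherwise disturb the clean classification. Once this propagation is in hand, matching each of the four combinatorial cases to the shape description in the theorem is a routine check.
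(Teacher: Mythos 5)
Your argument is essentially the paper's own: the theorem is presented there without a separate proof, as a direct consequence of Proposition~\ref{prop:unimodal} (log-concavity, hence unimodality, of $\{f_n(j)\}_{j=1}^{n-1}$, strengthened to strict log-concavity in the remark that follows it) together with the observation that the shape is then determined by the six values $f_n(0)$, $f_n(1)$, $f_n(2)$, $f_n(n-2)$, $f_n(n-1)$, $f_n(n)$. Your ratio-monotonicity propagation and the four-way case analysis at the two boundaries simply make explicit the bookkeeping the authors leave to the reader, so the two approaches coincide.
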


\begin{ex}
 We consider the special case
$\In=\st=(b/(a+b),a/(a+b))$ and $n=50$.
\begin{figure}[h]
  \centering{\includegraphics[width=5cm]{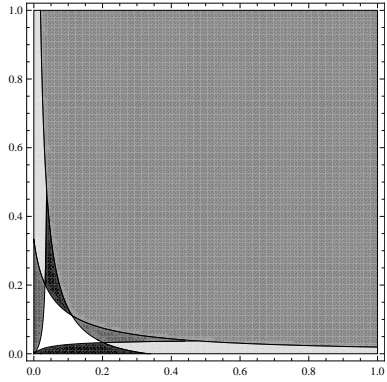}\\
  \caption{(I) When $(a,b)$ is in the gray region, $f_{50}$ is strictly unimodal;
  when $(a,b)$ is in the lower light gray region, $f_{50}$ is decreasing;
  when $(a,b)$ is in the upper light gray region, $f_{50}$ is increasing;
  (II) When $(a,b)$ is in the dark gray region, $f_{50}$ is bimodal with one peak on the left;
   when $(a,b)$ is in the black region, $f_{50}$ is bimodal with one peak on the right;
  (III) When $(a,b)$ is in the white region, $f_{50}$ is trimodal.}\label{pic00}}
\end{figure}
It follows from Proposition \ref{prop:fn}
that
$$
 f_n(0)=(1-b)^{n-1}\frac{a}{a+b},\quad  f_{n}(1)=\frac{(1-b)^{n-2} a b}{a+b}\Big(2+(n-2)\frac{a}{1-b}\Big),
$$
and
$$
f_n(2)=\frac{(1-b)^{n-3} a b}{a+b}\Big\{(1-a)\Big(2+(n-3)\frac{a}{1-b}\Big)+b\Big(1+(n-3)\frac{2 a}{1-b}+\big(\frac{a}{1-b}\big)^2\binom{n-3}{2}\Big)\Big\}.
$$
In a similar way one obtains the formulas for $f_n(n-2),f_n(n-1)$ and $f_n(n)$.
Figure \ref{pic00} is obtained via Theorem \ref{th:shape of prob mass func}. For some examples of probability mass functions in this class, see Figure \ref{pic01}.
\begin{figure}[h]
 \centering{ \includegraphics[width=5cm]{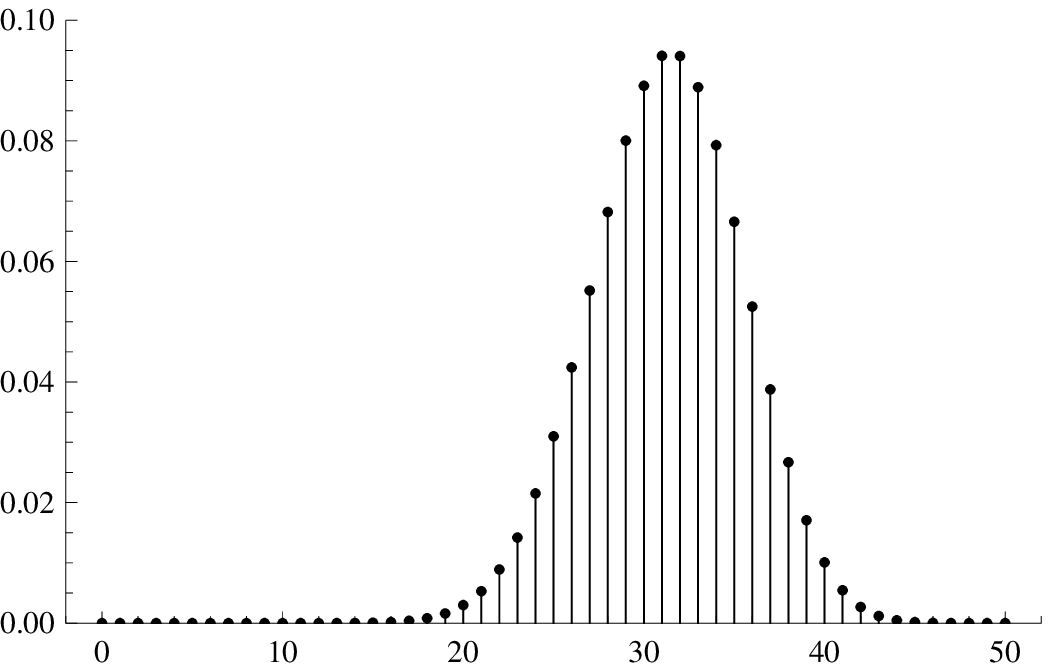}\quad\quad\includegraphics[width=5cm]{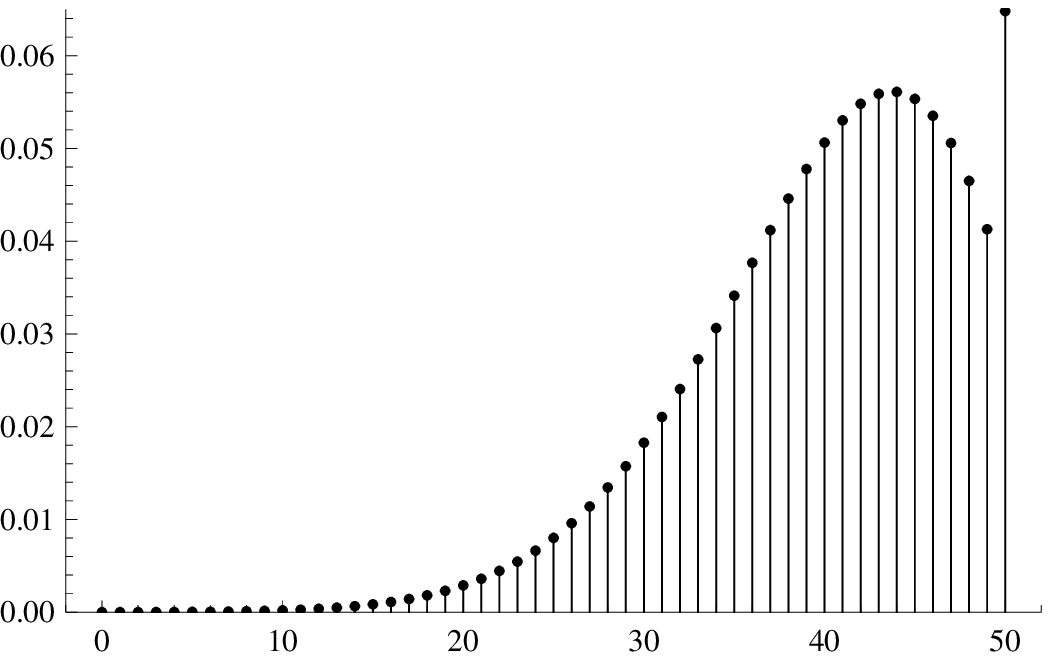}\\

 \medskip

  \includegraphics[width=5cm]{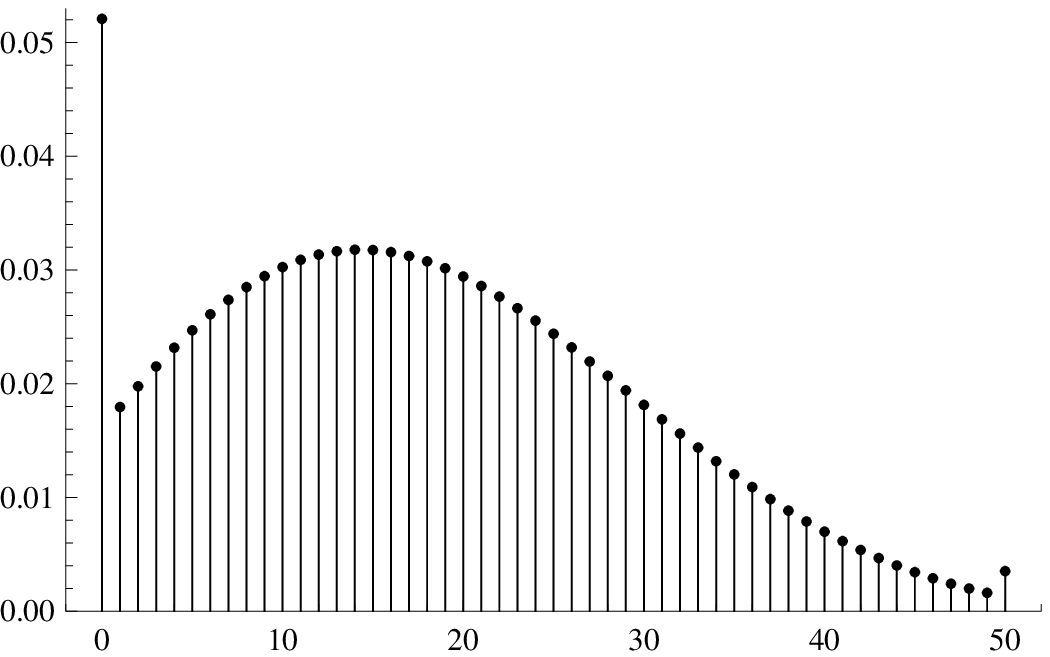}\quad\quad\includegraphics[width=5cm]{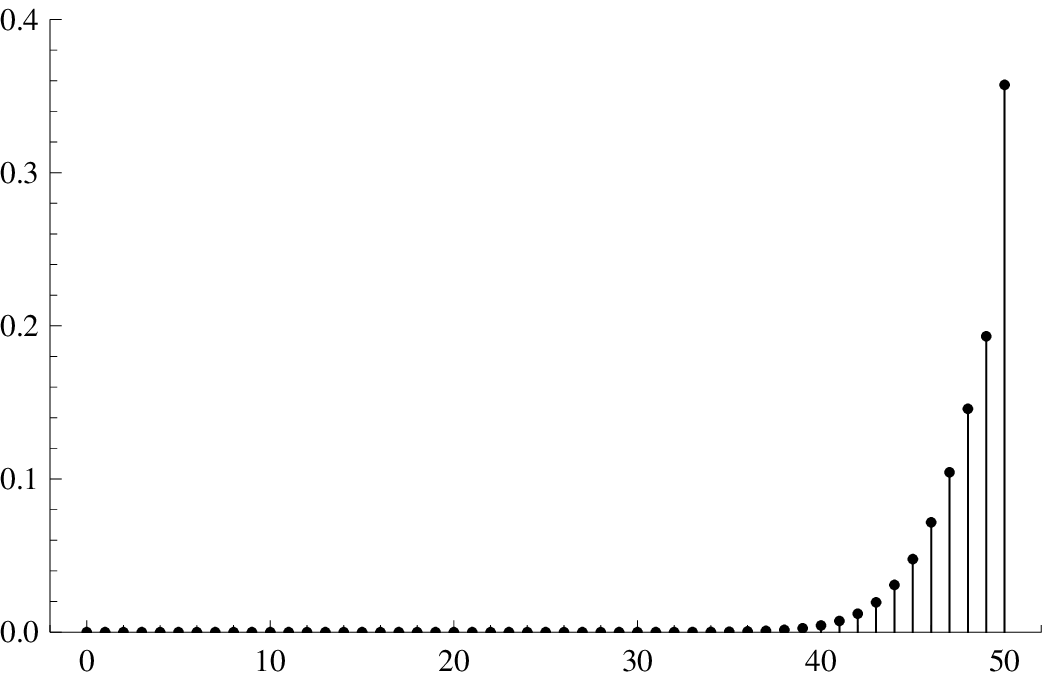}\\
  \caption{Probability mass function of $K_{50}$ with $ \In=\st$. In the upper left graph $a = 0.3, b=0.5$ and in the upper right graph $a=0.05,b=0.2$. In the lower left graph $a = 0.09, b=0.05$, and in the lower right  graph $a=0.02, b=0.5$.}\label{pic01}}
\end{figure}
\end{ex}

\section{The conditional probability mass functions}\label{sec:condmass}
For any $Bin(n,a,b,\In)$ distributed random variable $K_n$, let
$f^{\tau}_n$ be the probability mass function of $K_n^\tau$ with
$\tau\in\{\f,\ab\}$ , i.e.,
$$
f_n^{\tau}(j)=\prob{K_n^\tau=j}=\prob{K_n=j\,|\,  Y_n=\tau}.
$$
In order to deal with $f^{\tau}_n$ it is simpler to deal with the
partial probability mass functions
$$
\hat{f}_n^{\tau}(j)=\prob{K_n=j,~
Y_n=\tau}=f^{\tau}_n(j)\prob{Y_n=\tau}.
$$
 Since $\jab_n=f_n-\jf_n$, we only deal with $\jf_n$. It is easy to
obtain the recursion equation:
\begin{equation*}
\jf_{n+2}(j+1)=(1-b)\jf_{n+1}(j+1)+(1-a)\jf_{n+1}(j)-(1-a-b)\jf_{n}(j),
\end{equation*}
with initial conditions
\begin{equation*}
  \begin{split}
    \jf_1(0)&=0,\quad \jf_1(1)=\In_\f;\\
    \jf_2(0)&=0, \quad\jf_2(1)=\In_\ab b, \quad\jf_2(2)=\In_\f(1-a).
  \end{split}
\end{equation*}
Then  we obtain
the following proposition in a similar way as Proposition \ref{prop:fn}.
\begin{prop}\label{prop:fn_F}
  The partial probability mass function $\jf_n$ of a $Bin(n,a,b,\In)$ distributed random variable $K_n$ can be written as
  $$
\jf_n(j)=\left\{
\begin{array}{ll}
  (1-b)^{n-j}(1-a)^{j-1}\sum\limits_{k=0}^{j-1}\binom{j-1}{k}\delta^k c^{\f}_{j-1,k}(n)&\quad 1\le j\le n-1,\\
  \In_\f (1-a)^{n-1}&\quad j=n,\\
  0&\quad \textrm{otherwise},
\end{array}
\right.
$$
where $\delta=a b/\big((1-a)(1-b)\big)$ and
$$
c^{\f}_{j,k}(n)=\In_\f\binom{n-2-j}{k-1}+\frac{\In_\ab b }{1-b}\binom{n-2-j}{k}.
$$
\end{prop}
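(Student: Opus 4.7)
The plan is to follow the template of the proof of Proposition \ref{prop:fn} almost verbatim. Since the three-term recursion
\begin{equation*}
\jf_{n+2}(j+1)=(1-b)\jf_{n+1}(j+1)+(1-a)\jf_{n+1}(j)-(1-a-b)\jf_n(j),
\end{equation*}
together with the initial data $\jf_1(\cdot)$ and $\jf_2(\cdot)$ stated just before the proposition, determines $\jf_n(j)$ uniquely for every $n\ge 1$ and every $j\in\mathbb{Z}$, it suffices to verify that the proposed piecewise formula (i) reproduces those initial values and (ii) satisfies the recursion.

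Checking (i) reduces to direct substitution at $n=1,2$. The only non-trivial entry is $n=2$, $j=1$, where the middle-range expression collapses to $(1-b)\,c^{\f}_{0,0}(2)=(1-b)\cdot\tfrac{\In_\ab b}{1-b}=\In_\ab b$, matching the prescribed value; the other entries follow at sight from the $j=0$ and $j=n$ lines of the formula.

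For (ii), the interior range $1\le j+1\le n$ is where the real work sits, and I would simply recycle the sum manipulation in the proof of Proposition \ref{prop:fn}. That manipulation used only the shift identity $c_{j-1,k}(n)=c_{j,k}(n+1)=c_{j+1,k}(n+2)$ and the Pascal identity $c_{j,k}(n+2)=c_{j+1,k}(n+2)+c_{j+1,k-1}(n+2)$; both continue to hold with $c^{\f}$ in place of $c$, obtained by applying Pascal's rule term-by-term to the two summands of
\begin{equation*}
c^{\f}_{j,k}(n)=\In_\f\binom{n-2-j}{k-1}+\frac{\In_\ab b}{1-b}\binom{n-2-j}{k}.
\end{equation*}
Hence the entire chain of equalities establishing the recursion in the interior range transfers unchanged.

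The remaining boundary indices $j+1=n+1$ and $j+1=n+2$, where the formula transitions from its middle piece to the explicit value $\In_\f(1-a)^{n-1}$, require separate direct checks. The case $j+1=n+2$ collapses to $\In_\f(1-a)^{n+1}=(1-a)\,\In_\f(1-a)^n$. The case $j+1=n+1$ is the mildly fiddly one and the likeliest place for an indexing slip: one must match the middle-range expression for $\jf_{n+2}(n+1)$ against a combination of $\jf_{n+1}(n+1)$, $\jf_{n+1}(n)$ and $\jf_n(n)$, verifying that the \emph{absence} of the third term $\frac{\In_\ab ab}{(1-b)^2}\binom{n-2-j}{k+1}$ in $c^{\f}_{j,k}(n)$ (present in $c_{j,k}(n)$ of Proposition \ref{prop:fn}) is consistent with the change $\jf_m(0)=0$ in place of $f_m(0)=\In_\ab(1-b)^{m-1}$. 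No total-mass check is needed here, since the proposition does not assert that $\jf_n$ sums to a particular value.
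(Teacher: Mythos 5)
Your proposal is correct and takes essentially the same route as the paper, which itself gives no separate argument but simply states that the proposition follows ``in a similar way as Proposition \ref{prop:fn}'' from the recursion for $\jf_n$ and its initial conditions. Your fleshed-out version --- uniqueness of the solution of the recursion, the shift and Pascal identities for $c^{\f}_{j,k}(n)$, and the separate boundary checks at $j+1=n+1$, $j+1=n+2$ and at $j=0$ where $\jf_m(0)=0$ replaces $f_m(0)=\In_\ab(1-b)^{m-1}$ --- is exactly what the authors intend, and you are also right that no total-mass check is needed for a partial mass function.
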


From Lemma \ref{lemma:log-concave} it follows that the sequence
$\{\jf_n(j)\}_{j=0}^{n-1}$ is log-concave, and hence
$\{\ff_n(j)\}_{j=0}^{n-1}$ is log-concave. Thus, in contrast to $f_n$, $\ff_n$ can
not have a trimodal shape. The unimodal
or bimodal (with one peak on the right) shape of $\ff_n$ depends on the values of $\ff_n(j)$ for $ j=n-2,n-1,n$.

 Similarly, the shape of $\fab_n$ can only be unimodal or bimodal (with one peak on the left) depending on the values of $\fab_n(j)$ for $ j=0,1,2$.

\section*{Acknowledgement}
The second author is partially supported  by the National
Natural Science Foundation of China 10971069 and Shanghai Education Committee Project 11ZZ41.

\bibliography{Markov-bin-AKAD}

\begin{thebibliography}{10}

\bibitem{MR2327530}
V.~{\v{C}}ekanavi{\v{c}}ius and B.~Roos.
\newblock Binomial approximation to the {M}arkov binomial distribution.
\newblock {\em Acta Appl. Math.}, 96(1-3):137--146, 2007.

\bibitem{MR2485024}
Vydas {\v{C}}ekanavi{\v{c}}ius and Bero Roos.
\newblock Poisson type approximations for the {M}arkov binomial distribution.
\newblock {\em Stochastic Process. Appl.}, 119(1):190--207, 2009.

\bibitem{Daugman}
John Daugman.
\newblock The importance of being random: statistical principles of iris
  recognition.
\newblock {\em Pattern Recognition}, 36(2):279--291, 2003.

\bibitem{iris}
Michel Dekking and Andr{\'e} Hensbergen.
\newblock A problem with the assessment of an iris identification system.
\newblock {\em SIAM Rev.}, 51(2):417--422, 2009.

\bibitem{SKTM}
Michel Dekking and DeRong Kong.
\newblock A simple stochastic kinetic transport model.
\newblock {\em In preparation}, 2011.

\bibitem{MR0058150}
R.~L. Dobru{\v{s}}in.
\newblock Limit theorems for a {M}arkov chain of two states.
\newblock {\em Izvestiya Akad. Nauk SSSR. Ser. Mat.}, 17:291--330, 1953.

\bibitem{Edwards}
A.W.F. Edwards.
\newblock The meaning of binomial distribution.
\newblock {\em Nature, London}, 186:1074, 1960.

\bibitem{MR0109363}
K.~R. Gabriel.
\newblock The distribution of the number of successes in a sequence of
  dependent trials.
\newblock {\em Biometrika}, 46:454--460, 1959.

\bibitem{MR0282416}
H.~J. Helgert.
\newblock On sums of random variables defined on a two-state {M}arkov chain.
\newblock {\em J. Appl. Probability}, 7:761--765, 1970.

\bibitem{ZEROS}
Lily~L. Liu and Yi~Wang.
\newblock A unified approach to polynomial sequences with only real zeros.
\newblock {\em Adv. in Appl. Math.}, 38(4):542--560, 2007.

\bibitem{Markov}
A.~A. Markov.
\newblock Probability theory (4th ed.).
\newblock {\em Moscow (in Russian)}, 1924.

\bibitem{Mich}
A.M. Michalak and Peter~K. Kitanidis.
\newblock Macroscopic behavior and random-walk particle tracking of kinetically
  sorbing solutes.
\newblock {\em Water Resources Research}, 36(8):2133--2146, 2000.

\bibitem{Omey}
E.~Omey, J.~Santos, and S.~Van~Gulck.
\newblock A {M}arkov-binomial distribution.
\newblock {\em Appl. Anal. Discrete Math.}, 2(1):38--50, 2008.

\bibitem{Viveros}
R.~Viveros, K.~Balasubramanian, and N.~Balakrishnan.
\newblock Binomial and negative binomial analogues under correlated {B}ernoulli
  trials.
\newblock {\em The American Statistician}, 48:243--247, 1994.

\bibitem{MR633240}
Y.~H. Wang.
\newblock On the limit of the {M}arkov binomial distribution.
\newblock {\em J. Appl. Probab.}, 18(4):937--942, 1981.

\bibitem{LOGCONCAVITY}
Yi~Wang and Yeong-Nan Yeh.
\newblock Log-concavity and {LC}-positivity.
\newblock {\em J. Combin. Theory Ser. A}, 114(2):195--210, 2007.

\end{thebibliography}
\bibliographystyle{apt}

\end{document}